\theoremstyle{plain}
\newtheorem{theorem}{Theorem}
\newtheorem{proposition}[theorem]{Proposition}
\theoremstyle{remark}
\newtheorem{remark}[theorem]{Remark}
\newcommand{\arginf}{\mathop{\mathrm{arg\,inf}}\displaylimits}
\newtheorem{assumption}{Assumption}
\def\Xset{\mathsf{S}}
\def\Aset{\mathsf{A}}
\def\dx{{d_\Xset}}
\def\da{{d_\Aset}}
\def\arginf{\operatornamewithlimits{arginf}}
\numberwithin{equation}{section}
\def\JELname{{\bfseries JEL Classification}\enspace}
      \def\JEL#1{\par\addvspace\medskipamount{\rightskip=0pt plus1cm
      \def\and{\ifhmode\unskip\nobreak\fi\ $\cdot$
      }\noindent\JELname\ignorespaces#1\par}}
\begin{document}
\title[Primal-dual regression for MDP]{Primal-dual regression approach for Markov decision processes with general state and action space}
\author[D.~Belomestny]{Denis Belomestny$^{1}$}
\address{$^1$Faculty of Mathematics\\
Duisburg-Essen University\\
Thea-Leymann-Str.~9\\
D-45127 Essen\\
Germany}
%\address{$^2$Faculty of Computer Sciences\\
%HSE University
%\\
%Moscow, Russian Federation}
\email{denis.belomestny@uni-due.de}

\author[J.~Schoenmakers]{John Schoenmakers$^{2}$}
\address{$^2$Weierstrass Institute for Applied Analysis and Stochastics \\
\mbox{Mohrenstr.~39} \\
10117 Berlin \\
Germany}
\email{schoenma@wias-berlin.de}

\keywords{Markov decision processes, dual representation, pseudo regression}
\subjclass[2010]{90C40\and 65C05\and 62G08}

\date{}
\maketitle
\begin{abstract}
We develop a regression based primal-dual martingale approach for solving finite time horizon MDPs with general state and action space.
As a result, our method allows for
 the construction of tight upper and lower biased approximations of the value functions, and, provides tight approximations to the optimal policy.
 {In particular, we prove tight
 error bounds for the estimated duality gap featuring    polynomial dependence on the time horizon,
and sublinear dependence on the cardinality/dimension of the possibly infinite state and action space.}
 From a computational point of view the proposed method is efficient since, in contrast to usual duality-based methods for optimal control problems in the literature, the Monte Carlo procedures here involved do not require nested simulations.

%\parindent0pt
%\JEL{G10\and G12\and G13}
\end{abstract}

\section{Introduction}

Markov decision processes (MDPs) provide a general framework for modeling sequential decision-making under uncertainty. A large number of practical problems from various areas such as economics, finance, and machine learning can be viewed as MDPs.
{For a classical reference
we refer to \cite{puterman2014markov}, and for MDPs with application to finance, see
\cite{BaRi}.}
%and can, in principle, be solved via a dynamic %programming approach.
The aim is usually to find an optimal policy that maximizes the expected accumulated rewards (or minimizes the expected accumulated costs).
{In principle, these Markov decision  problems can be  solved by a dynamic programming approach;}
 however, in practice, this approach suffers from the so-called ``curse of dimensionality'' and the ``curse of horizon''  meaning that the complexity of the program increases exponentially in the dimension of the problem (dimensions of the state and action spaces) and the horizon (at least for problems without discounting). While the curse of dimensionality is known to be unavoidable in the case of general continuous state/action spaces,   the possibility of beating the curse of the horizon remains an open issue.
\par
 A natural performance metric is given by the value function $V^{\pi}$ which is the expected total reward of the agent following $\pi$. Unfortunately, even a precise knowledge of $V^{\pi}$ does not provide reliable information on how far is the policy $\pi$ from the optimal one. To address this issue a popular quality measure is the \emph{regret} of the algorithm which is the difference between the total sum of rewards accumulated when following the optimal policy and the sum of rewards obtained when following the current policy $\pi$. In the setting of finite state- and action space MDPs there is a variety of regret bounds for popular RL algorithms like {\sf Q}-learning \cite{q_learning_efficient}, optimistic value iteration \cite{azar:2017}, and many others.
Unfortunately, regret bounds beyond the discrete setup are much less common in the literature. Even more crucial drawback of the regret-based comparison is that regret bounds are typically pessimistic and rely on the unknown quantities of the underlying MDP's. A simpler, but related, quantity is the \emph{suboptimality gap (policy error)} $\Delta_\pi(x) :=  V^\star(x)-V^\pi(x)$. Since we do not know \(V^\star\), the suboptimality gap can not be calculated directly. There is a vast amount of literature devoted to theoretical guarantees for $\Delta_\pi(x)$, see e.g. \cite{antos2007fitted}, \cite{szepesvari2010algorithms}, \cite{pires2016policy} and references therein. However, these bounds share the same drawbacks as the regret bounds. Moreover, known bounds do not apply to the general policy $\pi$ and depend heavily on the particular algorithm which produced it. For instance, in Approximate Policy Iteration ({\sf API}, \cite{Bertsekas+Tsitsiklis:1996}) all existing bounds  for \(\Delta_\pi(x)\) depend on the one-step  error induced by the approximation of the action-value function. This one-step error is difficult to quantify since it depends on the unknown smoothness properties of the action-value function. Similarly, in policy gradient methods (see e.g. \cite{sutton:book:2018}), there is always an approximation error due to the choice of the family of policies that can be hardly quantified.
Though the accuracy of a suboptimal policy is generally unknown, the lack of theoretical guarantees on a suboptimal policy can be potentially addressed by providing a dual bound, that is, an upper bound (or lower bound) on the optimal expected reward (or cost).
\par
The last decades have seen a high development of duality  approaches for optimal stopping and control problems, initiated by the works
of \cite{J_Rogers2002} and \cite{J_HK2004} in the context of pricing of American and Bermudan options.
Essentially, in the dual approach one
minimizes a certain {\em dual martingale representation} corresponding to the  problem under consideration
over a set of martingales or martingale type elements.
In general terms, the dual version of an optimal control problem
$
V_{0}^{\ast}=\sup_{\alpha}\mathbb{E}[R\left(  \alpha\right)  ]
$
for a reward $R$ depending on adapted policies $\alpha$
may be formulated as%
\[
V_{0}^{\ast}=\inf_{\text{martingales }M\left(  \boldsymbol{a}\right)
}\mathbb{E}[\sup_{\boldsymbol{a}\text{ in control space}}\left(R\left(
\boldsymbol{a}\right)  -M\left(  \boldsymbol{a}\right) \right) ].
\]
Thus, in the dual approach one seeks for optimal  martingales rather than optimal  policies.
For optimal stopping problems, \cite{J_AB2004} showed how to compute martingales using stopping rules via nested Monte Carlo simulations. In \cite{rogers_2007_pathwise},
the dual representation for optimal stopping (hence American options) was generalized to Markovian control problems.
Somewhat later \cite{J_BroSmiSun} presented a dual representation for quite general control problems in terms of the so-called information relaxation
and martingale penalties. On the other hand, the dual representation for optimal stopping was generalized to multiple stopping
in \cite{J_Schoen2010} and \cite{J_BenSchZan}.
As a numerical approach to \cite{rogers_2007_pathwise},  \cite{J_BelKolSch}
applied regression methods to solve Markov decision problems that can be seen, in a sense, as a generalization of \cite{J_AB2004}.
However, it should be noted that in the convergence analysis of  \cite{J_BelKolSch}, the primal value function estimates
showed exponential dependence on the time horizon, and the corresponding dual algorithm was based on nested simulations while its convergence  was not analyzed there.
 Generally speaking, to the best of our knowledge, all error bounds for the primal/dual value function estimates available in the literature so far show exponential dependence on the horizon at least in the case of finite horizon undiscounted optimal control problems, e.g. see also  \cite{Z}.
\par
In this paper, we propose a novel approach to constructing valid dual upper bounds on the optimal value function via simulations and pseudo regression in the case of finite horizon MDPs with general (possibly continuous) state and action spaces. {This approach
includes the construction of primal value functions via a backwardly structured pseudo regression procedure based on a
properly chosen reference distribution (measure).
We thus avoid the delicate problem of inverting the  empirical covariance matrices. Note that in the context of optimal stopping, a similar primal procedure was proposed in \cite{BayRedSch}, though with accuracy estimates exploding with the number of exercise dates or time horizon.
As for the dual part of our algorithm,  we avoid nested Monte Carlo simulation that were used in many dual-type methods proposed in the literature so far, see for instance the path-wise optimization approach for MDPs in \cite{desai2012bounds} and
\cite{brown2022information} for an overview.
Instead, for constructing the martingale elements  we propose to combine a pointwise pseudo regression approach
with a suitable interpolation method such that the
martingale property is preserved.
Furthermore, we provide a rigorous convergence analysis showing that the  error of approximating the true value function via estimated dual value function (duality gap) depends at most polynomially on the time horizon. Moreover, we show that the stochastic part of the error depends sublinearly on the dimension (or cardinality in the finite case) of the state and action spaces. Let us also mention  \cite{zhu2017solving} for another approach to avoid nested simulations when estimating the conditional expectations, hence the martingale elements, inside the dual representation. However, \cite{zhu2017solving} left
the issue of bounding the duality gap in terms of the error bounds on the primal value functions as an open problem.  In this respect, we have solved this problem within the context
of the algorithm proposed in this paper. }
\par
The paper is organized as follows. The basic setup of the Markov Decision Process and the well-known representations
for its maximal expected reward is given in Section~\ref{setup}.
Section~\ref{sec:dualrep} recalls the dual representation for an MDP
from the literature. The primal pseudo regression algorithm
for the value functions is described in Section~\ref{ValAppr},
whereas the dual regression algorithm is presented in
Section~\ref{dualalg}. Section~\ref{sec:conv-value} and Section~\ref{convup} are dedicated to the convergence analysis of the primal and
dual algorithm, respectively. Appendix~A introduces some auxiliary notions
needed to formulate an auxiliary result in Appendix~B
stemming from the theory of empirical processes.

\section{Setup and basic properties of the Markov Decision Process}\label{setup}

We consider the discrete time finite horizon Markov Decision Process (MDP), given by the tuple
%A finite horizon, time-dependent Markov Decision Process (MDP)
\begin{equation*}
\mathcal{M}=(\mathsf{S},\mathsf{A},(P_{h})_{h\in]H]},(R_{h})_{h\in[H[},F,H),
%\label{mdpset}
\end{equation*}
made up by the following items:
\begin{itemize}
\item a measurable state space $(\mathsf{S},\mathcal{S})$ which may be finite or infinite;
\item a measurable action space $(\mathsf{A},\mathcal{A})$ which may be finite or infinite;
\item an integer $H$ which defines the horizon of the problem;
\item for each $h\in ]H],$ with $]H]:=\{1,\ldots,H\}$\footnote{We further write $[H]:=\{0,1,\ldots,H\}$ etc.}, a time dependent transition function $P_{h}:$ $\mathsf{S}\times\mathsf{A}\to\mathcal{P}(\mathsf{S})$
where $\mathcal{P}(\mathsf{S})$ is the space of probability measures on
$(\mathsf{S},\mathcal{S})$;
\item a time dependent reward function $R_{h}:$ $\mathsf{S}\times\mathsf{A}\to\mathbb{R},$
where $R_{h}(x,a)$ is the immediate reward associated with taking action
$a\in\mathsf{A}$ in state $x\in\mathsf{S}$ at time step $h\in[H[$;
\item a terminal reward $F:$ $\mathsf{S}\to\mathbb{R}$.
\end{itemize}
Introduce a filtered probability space $\mathfrak{S}:=\bigl(\Omega
,\mathcal{F},(\mathcal{F}_{t})_{t\in\lbrack H]},\mathbb{P}\bigr)$ with
\begin{equation}\label{omegaset}
\Omega:=\left(  \mathsf{S}\times\mathsf{A}\right)  ^{[H]}%
,\quad\mathcal{F}:=\left(  \mathcal{S}\otimes\mathcal{A}\right)
^{\otimes\lbrack H]},\quad(\mathcal{F}_{t})_{t\in\lbrack H]}:=
((\mathcal{S}\otimes\mathcal{A})^{\otimes t})_{t\in\lbrack
H]}.\footnote{In order to avoid irrelevant measure theoretic technicalities it
is assumed that our probability space is supported by discrete time processes,
rather than Wiener processes for instance. Nonetheless, it is possible to
involve larger probability spaces without essentially affecting the results in
this paper.}%
\end{equation}
For a fixed policy $\boldsymbol{\pi}=(\pi_{0},\ldots,\pi_{H-1})$ with $\pi
_{t}:$ $\mathsf{S}\rightarrow\mathcal{P}(\Aset),$ we consider an adapted
controlled process $\bigl(S_{t},A_{t}\bigr)_{t=h,\ldots,H}$ on $\mathfrak{S}$
satisfying $S_{0}\in\mathsf{S},$ $A_{0}\sim\pi_{0}(S_0),$ and
\[
S_{t+1}\sim P_{t+1}(\left.  \cdot\right\vert S_{t},A_{t}),\quad A_{t}\sim
\pi_{t}(S_{t}),\quad t=0,\ldots,H-1.
\]
The expected reward of this  MDP due to the chosen policy $\boldsymbol{\pi}$ is given by
\[
V_{0}^{\boldsymbol{\pi}}(x):=\mathbb{E}_{\boldsymbol{\pi},x}\left[
\sum_{t=0}^{H-1}R_{t}(S_{t},A_{t})+F(S_{H})\right], \quad x\in \mathsf{S}
\]
where \(\mathbb{E}_{\boldsymbol{\pi},x}\) stands for expectation induced by the policy \(\boldsymbol{\pi}\) and transition kernels \(P_{t},\) \(t\in [H],\)  conditional on the event \(S_0=x.\)
The
goal of the Markov decision problem is to determine the maximal expected
reward:
\begin{equation}
V_{0}^{\boldsymbol{\star}}:=\sup_{\boldsymbol{\pi}}\mathbb{E}%
_{\boldsymbol{\pi},x}\left[  \sum_{t=0}^{H-1}R_{t}(S_{t},A_{t})+F(S_{H})\right]
=\sup_{\boldsymbol{\pi}}V_{0}^{\boldsymbol{\pi}}(x_{0}).\label{Vopt}%
\end{equation}
Let us introduce for a generic time $h\in\left[H\right],$ the value
function due to the policy $\boldsymbol{\pi},$
\begin{align*}
V_{h}^{\boldsymbol{\pi}}(x)  & :=\mathbb{E}%
_{\boldsymbol{\pi},x}\left[
\left.  \sum_{t=h}^{H-1}R_{t}(S_{t},A_{t})+F(S_{H})\right\vert S_{h}=x\right]
,\text{ \ \ }x\in\mathsf{S}.
\end{align*}
Furthermore, let
\begin{equation}
\label{eq:optim_vh}
V_{h}^{\boldsymbol{\star}}(x)   :=\sup_{\boldsymbol{\pi}}%
V_{h}^{\boldsymbol{\pi}}(x)
\end{equation}
be the optimal value function at $h\in\left[  H\right]$. It is well known that under weak conditions,
there exists an optimal policy  solving \eqref{eq:optim_vh} which depends on $S_{t}$ in a deterministic
way. In this case, we shall write $\boldsymbol{\pi}^{\boldsymbol{\star}}%
=(\pi_{t}^{\boldsymbol{\star}}(S_{t}))$ for some mappings $\pi_{t}^{\star}:$
$\mathsf{S}\rightarrow\Aset$. One has
the following result, see \cite{puterman2014markov}.

\begin{theorem}\label{Bellman}
Let  $x\in\mathsf{S}$ be fixed. It holds  $V_{H}%
^{\boldsymbol{\star}}(x)=F(x),$ and
\begin{equation}
V_{h}^{\boldsymbol{\star}}(x)=\sup_{a\in A}\left(  R_{h}(x,a)+\mathbb{E}%
_{S_{h+1}\sim P_{h+1}(\cdot|x,a)}\left[  V_{h+1}^{\boldsymbol{\star}}%
(S_{h+1})\right]  \right)  ,\quad h=H-1,\ldots,0.\label{eq:bellman-q}%
\end{equation}
Furthermore, if $R_{h}$ is continuous and the action space is compact, the
supremum in (\ref{eq:bellman-q}) is attained at some deterministic optimal action
$a^{\boldsymbol{\star}}=\pi_{h}^{\boldsymbol{\star}}(x)$.
\end{theorem}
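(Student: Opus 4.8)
The plan is to proceed by backward induction on $h$, this being the classical dynamic programming recursion. The base case $h=H$ is immediate: the inner sum $\sum_{t=H}^{H-1}$ is empty, so only the terminal reward survives and $V_H^{\boldsymbol{\pi}}(x)=\mathbb{E}_{\boldsymbol{\pi},x}[F(S_H)\mid S_H=x]=F(x)$ for \emph{every} policy $\boldsymbol{\pi}$, whence $V_H^{\boldsymbol{\star}}(x)=F(x)$.

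For the inductive step, I assume \eqref{eq:bellman-q} is known at level $h+1$, i.e.\ both the value of $V_{h+1}^{\boldsymbol{\star}}$ and its measurability in the state are available. Fixing an arbitrary policy $\boldsymbol{\pi}=(\pi_h,\ldots,\pi_{H-1})$, I would split off the time-$h$ reward and condition first on the action $A_h\sim\pi_h(x)$ and then on $S_{h+1}$. The tower property together with the Markov property of $(S_t,A_t)$ gives the one-step decomposition
\[
V_h^{\boldsymbol{\pi}}(x)=\int_{\mathsf{A}}\pi_h(da\mid x)\left(R_h(x,a)+\int_{\mathsf{S}}V_{h+1}^{\boldsymbol{\pi}}(y)\,P_{h+1}(dy\mid x,a)\right).
\]
The essential point is that, conditional on $S_{h+1}=y$, the future rewards depend only on the restricted policy $(\pi_{h+1},\ldots,\pi_{H-1})$, so the inner conditional expectation is precisely $V_{h+1}^{\boldsymbol{\pi}}(y)$.

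From this identity both inequalities follow. Bounding $V_{h+1}^{\boldsymbol{\pi}}\le V_{h+1}^{\boldsymbol{\star}}$ inside the integral and using that an average over $a$ against $\pi_h(\cdot\mid x)$ cannot exceed the supremum over $a$ yields $V_h^{\boldsymbol{\pi}}(x)\le\sup_{a}(R_h(x,a)+\int V_{h+1}^{\boldsymbol{\star}}\,dP_{h+1})$; taking the supremum over $\boldsymbol{\pi}$ gives the ``$\le$'' half. For the reverse direction I would fix $\varepsilon>0$, select an action $a_\varepsilon$ that is $\varepsilon$-optimal for the right-hand side and a policy that is $\varepsilon$-optimal for $V_{h+1}^{\boldsymbol{\star}}$ (which exists by the inductive hypothesis), and glue them into a policy from time $h$ onward achieving the right-hand side up to $2\varepsilon$; letting $\varepsilon\to0$ closes the gap and establishes \eqref{eq:bellman-q}. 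For attainment under continuity of $R_h$ and compactness of $\mathsf{A}$, assuming weak continuity of the kernel so that $a\mapsto\int V_{h+1}^{\boldsymbol{\star}}(y)\,P_{h+1}(dy\mid x,a)$ is continuous, the objective $a\mapsto R_h(x,a)+\int V_{h+1}^{\boldsymbol{\star}}\,dP_{h+1}$ is continuous on the compact set $\mathsf{A}$ and hence attains its maximum at some $a^{\boldsymbol{\star}}=\pi_h^{\boldsymbol{\star}}(x)$.

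I expect the genuine obstacle to lie not in the inequalities above but in the measurability bookkeeping that underlies them: one must argue that $V_{h+1}^{\boldsymbol{\star}}$ is a measurable function of the state, even though it is defined as a supremum over an uncountable family of policies, and that the maximizer $x\mapsto\pi_h^{\boldsymbol{\star}}(x)$ can be chosen \emph{measurably} so as to define an admissible feedback policy. Both points are settled by standard measurable-selection machinery (the measurable maximum theorem, propagated through the induction), and since the statement is quoted from \cite{puterman2014markov}, I would invoke that framework rather than reproduce its technical core.
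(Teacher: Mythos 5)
Your proposal is correct in outline, and the comparison here is somewhat degenerate: the paper does not prove Theorem~\ref{Bellman} at all, it simply quotes it from \cite{puterman2014markov}, so your backward-induction sketch is precisely the standard argument behind the citation rather than an alternative to anything in the paper. Two refinements are worth noting. First, in your gluing step the phrase ``a policy that is $\varepsilon$-optimal for $V_{h+1}^{\boldsymbol{\star}}$'' must mean a \emph{single} policy $\boldsymbol{\pi}^{\varepsilon}$ with $V_{h+1}^{\boldsymbol{\pi}^{\varepsilon}}(y)\geq V_{h+1}^{\boldsymbol{\star}}(y)-\varepsilon$ simultaneously for $P_{h+1}(\cdot\mid x,a_{\varepsilon})$-almost all $y$; pointwise-in-$y$ choices cannot be assembled into an admissible Markov policy without a measurable-selection argument, so this uniform selection is exactly the technical core you are deferring to Puterman (it is not merely ``bookkeeping'' on top of your inequalities, it is what makes the ``$\geq$'' half go through on a general state space). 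Second, you correctly observe that attainment of the supremum requires, beyond continuity of $R_{h}$ and compactness of $\mathsf{A}$, some continuity of $a\mapsto\mathbb{E}_{S_{h+1}\sim P_{h+1}(\cdot\mid x,a)}[V_{h+1}^{\boldsymbol{\star}}(S_{h+1})]$ (e.g.\ weak continuity of the kernel together with appropriate regularity of $V_{h+1}^{\boldsymbol{\star}}$); the paper's statement leaves this hypothesis implicit, so flagging it is a genuine improvement rather than a defect of your argument.
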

Let us further introduce recursively $Q_{H}^{\star}(x,a)=F(x),$ and
\[
Q_{h}^{\star}(x,a):= R_{h}(x,a)+\mathbb{E}_{S_{h+1}\sim P_{h+1}%
(\cdot|x,a)}\left[  \sup_{a^{\prime}\in A}Q_{h+1}^{\boldsymbol{\star}}%
(S_{h+1},a^{\prime})\right], \quad h=H-1,\ldots,0.
\]
Then $Q_{h}^{\star}(x,a)$ is called the \textit{optimal state-action} function ($Q$-function) and one
thus has%
\[
V_{h}^{\boldsymbol{\star}}(x)=\sup_{a\in A}Q_{h}^{\star}(x,a),\text{ \ \ }%
\pi_{h}^{\boldsymbol{\star}}(x)\in\arg\max_{a\in\mathsf{A}}Q_{h}^{\star
}(x,a),\text{ \ \ for}\quad h\in\lbrack H].
\]
Finally, note that the optimal value function $V^{\star}$ satisfies due to Theorem~\ref{Bellman},
\[
V_{h}^{\star}(x)=T_{h}V_{h+1}^{\star}(x),\text{ \ \ }h\in\lbrack H[,
\]
where $T_{h}V(x):=\sup_{a\in A}\left(  R_{h}(x,a)+P_{h+1}^{a}V(x)\right)  $
with $P_{h+1}^{a}V(x):=\mathbb{E}_{S_{h+1}\sim P_{h+1}(\cdot
|x,a)}\left[  V(S_{h+1})\right]  .$

\section{Dual representation }\label{sec:dualrep}
Let us denote by $a_{<t}$ the deterministic vector of actions $a_{<t}%
=(a_{0},\ldots,a_{t-1})\in\mathsf{A}^{t},$ similarly $a_{\leq t}$ etc., and
denote with $S_{t}\equiv(S_{t}(a_{<t}))_{t\in\{0,\ldots,H\}}$ the process
defined (in distribution) via
\[
S_{0}=x,\quad S_{t+1}\equiv S_{t+1}(a_{< t+1})\sim P_{t+1}(\cdot
|S_{t},a_{t}),\quad t=0,\ldots,H-1.
\]
Let us also denote by $\Xi$ the class of $H$-tuples $\boldsymbol{\xi}=(\xi
_{t}(\cdot,\cdot),$ $t\in]H])$ consisting of $\mathcal{A}^{\otimes t}%
\times\mathcal{F}_{t}$ measurable random variables
\begin{align*}
\xi_{t} &  :(a_{<t},\omega)\in\mathsf{A}^{t}\mathsf{\times\Omega\rightarrow
}\mathbb{R}
\end{align*}
 satisfying
 \begin{align*}
\mathbb{E}\left[  \left.  \xi_{t}(a_{<t},\omega)\right\vert \mathcal{F}%
_{t-1}\right]   &  =0,\quad\text{for all }(a_{<t})\in\mathsf{A}^{t},\quad
t\in\{1,\ldots,H\}.
\end{align*}

The next duality theorem, essentially due to \cite{rogers_2007_pathwise}, may
be seen as a generalization of the dual representation theorem for optimal
stopping, developed independently in \cite{J_Rogers2002} and \cite{J_HK2004},
to Markov decision processes. For a more general dual representations in terms
of information relaxation, see \cite{J_BroSmiSun}. Let us further mention dual
representations in the context of multiple stopping developed in
\cite{J_Schoen2010}, \cite{BenSchZan}, and applications to flexible caps
studied in \cite{J_BalMahSch}.

\begin{theorem}
\label{Rog}

The following statements hold.

\begin{description}

\item[(i)] For any $\boldsymbol{\xi}\in\Xi$ and any \(x\in \mathsf{S}\) we have \(V_{0}^{\mathrm{up}}(x;\boldsymbol{\xi})\geq V_{0}^{\star}(x)\) with
\begin{multline}
V_{0}^{\mathrm{up}}(x;\boldsymbol{\xi}):=\label{uppb}
\mathbb{E}_{\boldsymbol{\pi},x}\left[  \sup_{a_{\geq0}\in\mathsf{A}^{H}}\left(  \sum_{t=0}%
^{H-1}\left(  R_{t}(S_{t}(a_{<t}),a_{t})-\xi_{t+1}(a_{< t+1})\right)
+F(S_{H}(a_{<H}))\right)  \right]
\end{multline}
where as usual we suppress the dependence on $\omega$ for notational
simplicity. Hence $V_{0}^{\mathrm{up}}(x;\boldsymbol{\xi})$ is an upper (upper biased) bound
for $V_{0}^{\star}(x).$

\item[(ii)] If we set $\boldsymbol{\xi}^{\star}=(\xi_{t}^{\star},\,t\in\lbrack
H])\in\Xi$ with
\begin{equation}
\xi_{t+1}^{\star}(a_{< t+1}):= V_{t+1}^{\star}(S_{t+1}(a_{<
t+1}))-\mathbb{E}_{S_{t+1}^{\prime}\sim P_{t+1}(\cdot|S_{t}(a_{<t}),a_t)}\left[
V_{t+1}^{\star}(S_{t+1}^{\prime})\right]\label{eq:xistar}%
\end{equation}
for \(t=0,\ldots
,H-1,\) then, almost surely,
\begin{equation}
V_{0}^{\star}(x_{0})=\sup_{a_{\geq0}\in\mathsf{A}^{H}}\left(  \sum_{t=0}%
^{H-1}\left(  R_{t}(S_{t}(a_{<t}),a_{t})-\xi_{t+1}^{\star}(a_{< t+1})\right)
+F(S_{H}(a_{<H}))\right)  .\label{strong}%
\end{equation}

\end{description}
\end{theorem}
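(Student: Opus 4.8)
The plan is to prove (i) by combining the pathwise sub-optimality of the inner supremum with a freezing argument that annihilates the penalty terms in mean, and (ii) by a telescoping identity driven by the Bellman relation of Theorem~\ref{Bellman}. As a preliminary I would record that $\boldsymbol{\xi}^{\star}\in\Xi$: conditionally on $\mathcal{F}_{t}$ one has $S_{t+1}(a_{<t+1})\sim P_{t+1}(\cdot\mid S_{t}(a_{<t}),a_{t})$, so the subtracted term in \eqref{eq:xistar} is precisely $\mathbb{E}[V_{t+1}^{\star}(S_{t+1}(a_{<t+1}))\mid\mathcal{F}_{t}]$, whence $\mathbb{E}[\xi_{t+1}^{\star}(a_{<t+1})\mid\mathcal{F}_{t}]=0$ for every fixed string $a_{<t+1}$, as membership in $\Xi$ requires.

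For (i), fix $\boldsymbol{\xi}\in\Xi$ and denote by $\Phi(a_{\geq0},\omega)$ the maximand inside the expectation in \eqref{uppb}. Since $\sup_{a_{\geq0}}\Phi\geq\Phi(\tilde a_{\geq0})$ pathwise for any admissible choice, I would insert the adaptive optimal controls $a_{t}^{\star}=\pi_{t}^{\star}(S_{t}(a_{<t}^{\star}))$ (or $\varepsilon$-optimal ones, letting $\varepsilon\downarrow0$, should no maximizer exist). Under these controls $S_{t}(a_{<t}^{\star})$ carries the law of the $\boldsymbol{\pi}^{\star}$-controlled chain, so the reward part has expectation $V_{0}^{\boldsymbol{\pi}^{\star}}(x)=V_{0}^{\star}(x)$. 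It then remains to see that each penalty has vanishing mean, and here the decisive point is that $a_{<t+1}^{\star}$ is $\mathcal{F}_{t}$-measurable: by the substitution (freezing) property of conditional expectation,
\[
\mathbb{E}\bigl[\xi_{t+1}(a_{<t+1}^{\star})\mid\mathcal{F}_{t}\bigr]=\bigl(\mathbb{E}[\xi_{t+1}(b)\mid\mathcal{F}_{t}]\bigr)\big|_{b=a_{<t+1}^{\star}}=0,
\]
where one crucially uses that the orthogonality defining $\Xi$ holds simultaneously for \emph{all} deterministic strings $b=a_{<t+1}$, not merely along the realized path. Taking expectations gives $V_{0}^{\mathrm{up}}(x;\boldsymbol{\xi})\geq V_{0}^{\star}(x)$.

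For (ii), I would substitute $\boldsymbol{\xi}^{\star}$ into the maximand and regroup. Writing $S_{t}=S_{t}(a_{<t})$ and using $R_{t}(S_{t},a_{t})+P_{t+1}^{a_{t}}V_{t+1}^{\star}(S_{t})=Q_{t}^{\star}(S_{t},a_{t})$, the one-step contribution becomes $R_{t}(S_{t},a_{t})-\xi_{t+1}^{\star}=Q_{t}^{\star}(S_{t},a_{t})-V_{t+1}^{\star}(S_{t+1})$. Summing over $t$, reindexing the terms $V_{t+1}^{\star}(S_{t+1})$, and using $V_{H}^{\star}(S_{H})=F(S_{H})$ to cancel the terminal reward, I obtain
\[
\sum_{t=0}^{H-1}\bigl(R_{t}(S_{t},a_{t})-\xi_{t+1}^{\star}\bigr)+F(S_{H})=Q_{0}^{\star}(x,a_{0})+\sum_{t=1}^{H-1}\bigl(Q_{t}^{\star}(S_{t},a_{t})-V_{t}^{\star}(S_{t})\bigr).
\]
By $V_{t}^{\star}=\sup_{a}Q_{t}^{\star}(\cdot,a)$ every bracket is $\leq0$ and $Q_{0}^{\star}(x,a_{0})\leq V_{0}^{\star}(x)$, so the right-hand side is $\leq V_{0}^{\star}(x)$ for all $a_{\geq0}$, giving $\sup_{a_{\geq0}}\Phi^{\star}\leq V_{0}^{\star}(x)$ almost surely. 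For the reverse inequality I would choose, along the path, maximizers $a_{t}=\pi_{t}^{\star}(S_{t})$ of $Q_{t}^{\star}(S_{t},\cdot)$ (again $\varepsilon$-maximizers otherwise): all brackets then vanish and the expression equals $V_{0}^{\star}(x)$, proving \eqref{strong}.

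I expect the genuine difficulties to be measure-theoretic rather than algebraic. The first is the freezing step in (i), whose validity rests entirely on the constraint in $\Xi$ being imposed for every deterministic action string and not only along the optimal trajectory. The second is the cluster of measurable-selection issues: the measurability of $\omega\mapsto\sup_{a_{\geq0}}\Phi(a_{\geq0},\omega)$, needed for the outer expectation to make sense, and the existence of the maximizing controls $\pi_{t}^{\star}$ underlying both parts; I would dispatch these under the compactness-of-$\mathsf{A}$ and continuity-of-$R_{t}$ hypotheses already invoked in Theorem~\ref{Bellman}, or, absent them, through the $\varepsilon$-optimal approximation indicated above.
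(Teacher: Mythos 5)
Your proof is correct and, for most of its length, mirrors the paper's own argument. Part (i) rests on exactly the freezing/orthogonality step the paper uses: the paper substitutes the adapted actions of an \emph{arbitrary} policy $\boldsymbol{\pi}$ and then takes the supremum over policies, while you substitute an optimal or $\varepsilon$-optimal policy directly -- an immaterial difference. Likewise, your $Q$-function regrouping for the inequality $\sup_{a_{\geq 0}}\Phi^{\star}\leq V_{0}^{\star}(x)$ is the same telescoping-plus-Bellman computation as the paper's decomposition into $V_{0}^{\star}(x)+\Delta(x)$ with $\Delta(x)\leq 0$, since $R_{t}+P_{t+1}^{a}V_{t+1}^{\star}-V_{t}^{\star}=Q_{t}^{\star}-V_{t}^{\star}$. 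Where you genuinely depart from the paper is the reverse inequality in (ii): you build a pathwise adaptive maximizing sequence $a_{t}=\pi_{t}^{\star}(S_{t})$ so that every bracket vanishes, whereas the paper avoids any maximizer altogether by a sandwich argument -- the pathwise bound gives $\sup_{a_{\geq 0}}\Phi^{\star}\leq V_{0}^{\star}(x)$ a.s., part (i) applied to $\boldsymbol{\xi}^{\star}$ gives $\mathbb{E}\left[\sup_{a_{\geq 0}}\Phi^{\star}\right]\geq V_{0}^{\star}(x)$, and a random variable bounded above by a constant whose mean is at least that constant must equal it a.s. The paper's route buys two things: it needs no existence of (measurable) maximizers or $\varepsilon$-selectors of $Q_{t}^{\star}(S_{t},\cdot)$, and it sidesteps in part (ii) the essential-supremum subtlety you correctly flag -- under the paper's stated convention the supremum in \eqref{strong} is an essential supremum, which dominates each \emph{fixed} deterministic action string a.s. but not automatically an $\omega$-dependent (adaptive) choice, so your selection step needs an additional regularity or separability argument to be airtight (in fairness, the same caveat attaches to the substitution of random adapted actions that both you and the paper perform in part (i)). Your route, in return, is more constructive: when exact maximizers exist it exhibits where the supremum in \eqref{strong} is attained, namely along the actions generated by the optimal policy.
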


\begin{remark}
In Theorem~\ref{Rog} and further below, supremum should be interpreted as  essential supremum
in case it concerns the supremum over an uncountable family of random variables.
\end{remark}
In principle, Theorem~\ref{Rog} may be inferred from
\cite{rogers_2007_pathwise} or \cite{J_BroSmiSun}. Nonetheless, also for the
convenience of the reader, we here give a concise proof in terms of the
present notation and terminology.

\medskip

\begin{proof}
(i) Since for any $\boldsymbol{\xi}\in\Xi$ and policy $\boldsymbol{\pi}$ in
(\ref{Vopt}) one has that%
\[
\mathbb{E}%
_{\boldsymbol{\pi},x}\left[  \xi_{t+1}(A_{\leq t})\right]
=\mathbb{E}%
_{\boldsymbol{\pi},x}\mathbb{E}_{\boldsymbol{\pi}}\left[  \left.
\xi_{t+1}(A_{\leq t})\right\vert \mathcal{F}_{t}\right]  =0,
\]
for $t=h,...,H-1,$ it follows that%
\[
V_{0}^{\boldsymbol{\star}}(x)=\sup_{\boldsymbol{\pi}}\mathbb{E}%
_{\boldsymbol{\pi},x}\left[  \sum_{t=0}^{H-1}\left(  R_{t}(S_{t}(A_{<t}%
),A_{t})-\xi_{t+1}(A_{\leq t})\right)  +F(S_{H}(A_{<H}))\right]  ,
\]
from which (\ref{uppb}) follows immediately.

(ii) We may write for any $a_{\geq 0}\in\mathsf{A}^{H}$,%
\begin{multline*}
\sum_{t=0}^{H-1}\left(  R_{t}(S_{t}\left(  a_{<t}\right)  ,a_{t})-\xi
_{t+1}^{\star}(a_{\leq t})\right)  +F(S_{H}(a_{<H}))
\\
= \sum_{t=0}^{H-1}R_{t}(S_{t}\left(  a_{<t}\right)  ,a_{t})-\sum_{t=0}
^{H-1}V_{t+1}^{\star}(S_{t+1}(a_{\leq t}))
\\
 +\sum_{t=0}^{H-1}\mathbb{E}_{S_{t+1}^{\prime}\sim P_{t+1}(\cdot
|S_{t}(a_{<t}),a_t)}\left[  V_{t+1}^{\star}(S_{t+1}^{\prime})\right]
+F(S_{H}(a_{<H})).
\end{multline*}
Hence
\begin{equation*}
\sum_{t=0}^{H-1}\left(  R_{t}(S_{t}\left(  a_{<t}\right)  ,a_{t})-\xi
_{t+1}^{\star}(a_{\leq t})\right)  +F(S_{H}(a_{<H}))
  = V_{0}^{\star}(x)+\Delta(x),
\end{equation*}
with
\begin{multline*}
\Delta(x):=F(S_{H}(a_{<H}))-V_{H}^{\star}(S_{H}(a_{<H}))+
  \\
\sum_{t=0}^{H-1}\left(R_{t}(S_{t}\left(
a_{<t}\right)  ,a_{t})+\mathbb{E}_{S_{t+1}^{\prime}\sim P_{t+1}(\cdot
|S_{t},a_{t})}\left[  V_{t+1}^{\star}(S_{t+1}^{\prime})\right]   -V_{t}^{\star}(S_{t}(a_{<t})) \right)    \leq 0,
\end{multline*}
where the latter inequality follows from the Bellman principle, see
Theorem~\ref{Bellman}. The statement (\ref{strong}) now follows by taking the
 supremum over $a_{\geq0}\in\mathsf{A}^{H}$ on the left-hand-side,
applying (\ref{uppb}) and using the sandwich property.
\end{proof}

\section{Primal regression algorithm for the value function}\label{ValAppr}
In Section~\ref{dualalg} we will describe regression based martingale methods
for computing dual upper bounds based on Theorem~\ref{Rog}. However, these
methods require as an input a sequence of (approximate) value functions $V_{h},$
$h\in\lbrack H]$. Below we describe a regression-based  algorithm for
approximating the value functions $V_{h}^{\star},$ $h\in\lbrack H]$,
backwardly in time. In fact, unlike the usual regression, the proposed algorithm
is based on a kind of \textquotedblleft pseudo\textquotedblright\ or
\textquotedblleft quasi\textquotedblright\ regression procedure with respect
to some reference measure $\mu_h$ which is assumed to be such that
$P_{h}(\cdot|x,a)$ is absolutely continuous w.r.t. $\mu_h$ for any $h\in]H],$
$x\in\mathsf{S}$ and $a\in\mathsf{A}$. Furthermore, we consider a vector of
basis functions%
\[
\boldsymbol{\gamma}_{K}:=(\gamma_{1},\ldots,\gamma_{K})^{\top},\text{
\ \ }\gamma_{k}:\mathsf{S}\rightarrow\mathbb{R}\text{, \ \ }k=1,\ldots,K,
\]
such that the matrix%
\[
\Sigma\equiv\Sigma_{h,K}:=\mathbb{E}_{X\sim\mu_h}\left[  \boldsymbol{\gamma}_{K}%
(X)\boldsymbol{\gamma}_{K}^{\top}(X)\right]
\]
is analytically known and invertible. This basically means that the choice of basis functions is adapted to the choice of the reference measure. For example, if \(\mu_h\) is Gaussian one can chose basis functions to be polynomials or trigonometric polynomials.
The algorithm reads then as follows.
At $h=H$ we set $V_{H,N}(x)=V_{H}^{\star}(x)=F(x).$ Suppose that for some
$h\in\lbrack H[,$ the approximations $V_{t,N}$ of $V_{t}^{\star},$
$h+1\leq
t\leq H,$
are already obtained. We now approximate $V_{h}^{\star}$   via
simulating independent  random variables $X_{i}\equiv X_{i}^h\sim\mu_h,$ $Y_{i}^{a}\sim P_{h+1}(\cdot|X_{i},a),$
$a\in\mathsf{A}$, $i=1,\ldots,N,$ and setting%
\begin{align}
V_{h,N}(x)  & =T_{h,N}V_{h+1,N}(x):=\sup_{a\in\mathsf{A}}(R_{h}(x,a)+\widetilde{P}_{h+1,N}%
^{a}V_{h+1,N}(x))\label{lowconst}
\end{align}
where
\begin{equation}
\widetilde{P}_{h+1,N}^{a}V(x):=\mathcal{T}_{\widetilde{L}_{h+1}}[\beta
_{N,a}^{\top}\boldsymbol{\gamma}_{K}](x):=\max\bigl(-\widetilde{L}_{h+1}%
,\min\bigl(\widetilde{L}_{h+1},\beta_{N,a}^{\top}\boldsymbol{\gamma}%
_{K}(x)\bigr)\bigr)\label{7+}
\end{equation}
with $\widetilde{L}_{h+1}$ being a positive constant depending on $h,$ which
will be defined later, and
\begin{equation}
\beta_{N,a}:=\frac{1}{N}\sum_{i=1}^{N}U_{i}^{a},\quad U_{i}^{a}:=Z_{i}%
^{a}\Sigma^{-1}\boldsymbol{\gamma}_{K}(X_{i}),\quad Z_{i}^{a}:=V(Y_{i}%
^{a}),\quad i=1,\ldots,N.\label{7++}
\end{equation}
Note that $\beta_{a}:=\mathbb{E}\left[  \beta_{N,a}\right]  =\mathbb{E}\left[
V(Y_{1}^{a})\Sigma^{-1}\boldsymbol{\gamma}_{K}(X_{1})\right]  $ solves the
minimization problem%
\[
\inf_{\beta\in\mathbb{R}^{K}}\mathbb{E}\left[  \left(  V(Y_{1}^{a}%
)-\beta^{\top}\boldsymbol{\gamma}_{K}(X_{1})\right)  ^{2}\right]  .
\]
Thus, the quantity $\widetilde{P}_{h+1,N}^{a}V_{h+1,N}(x)$ aims to approximate
the conditional expectation
\[
x\rightarrow\mathbb{E}_{S^{\prime}\sim P_{h+1}(\cdot|x,a)}\left[
V_{h+1,N}(S^{\prime})\right]  ,\quad a\in\mathsf{A}.
\]
{The use of clipping at level $\widetilde{L}_{h+1}$ is done to avoid large
values of $\beta_{N,a}^{\top}\boldsymbol{\gamma}_{K}(x).$} After $H$ steps of
the above procedure we obtain the estimates $V_{H,N},\ldots,V_{0,N}%
.$\footnote{Actually, for computing $V_{0}(x_{0})$ we may replace the above
procedure by a standard Monte Carlo simulation when going from $V_{1}$ to
$V_{0}$.}

\section{Dual regression algorithm}

\label{dualalg} In this section we outline how to construct an upper biased
estimate based on Theorem~\ref{Rog} from a given sequence of approximations
$V_{t},$ $t\in[H]$  obtained, for example, as described in
Section~\ref{ValAppr}.
\par
Theorem~\ref{Rog}-(ii) implies that we can restrict our attention to processes
$\boldsymbol{\xi}=(\xi_t)_{t\in [H]}$, where the $t+1$ component of \(\boldsymbol{\xi}\) is of the form
\begin{equation}
\xi_{t+1}(a_{\leq t})=m(S_{t+1}(a_{\leq t});S_{t}(a_{<t}),a_{t})
\label{repksi}%
\end{equation}
for a deterministic real valued function $m(\cdot;x,a)$ satisfying
\begin{equation}
\int m(y;x,a)P_{t+1}(dy|x,a)=0, \label{eq:m-zeromean}%
\end{equation}
for all $(x,a)\in\mathsf{S}\times\mathsf{A}$. Note that the condition
(\ref{eq:m-zeromean}) is time dependent. We shall denote by $\mathcal{M}%
_{t+1,x,a}$ the set of \textquotedblleft martingale\textquotedblright%
\ functions $m$ on \(\mathsf{S}\) that satisfy (\ref{eq:m-zeromean}) for time $t+1$, a state $x$,
and a control $a.$ In this section, we develop an algorithm  approximating $\boldsymbol{\xi
}^{\star}$ via
regression of $V_{t+1}$ on a properly chosen finite dimensional subspace of
$\mathcal{M}_{t+1,x,a}.$ The idea of approximating $\boldsymbol{\xi}^{\star}$
via regression can be explained as follows. Equation (\ref{eq:xistar}) and
(\ref{repksi}) imply that, for a particular $t\in\lbrack H[,$ the component
$\xi_{t+1}^{\star}(a_{\leq t})$ of the random vector $\boldsymbol{\xi}^{\star
}$ is given by $\xi_{t+1}^{\star}(a_{\leq t})=m_{t+1}^{\star}(S_{t+1}(a_{\leq
t});S_{t}(a_{<t}),a_{t}),$ where, for each $(x,a)\in\mathsf{S}\times
\mathsf{A}$, $m_{t+1}^{\star}(\cdot;x,a)$ solves the optimization problem
\begin{multline}
\arginf_{m\in\mathcal{M}_{t+1,x,a}}\mathbb{E}_{S_{t+1}^{\prime}\sim P_{t+1}%
(\cdot|x,a)}\left[  \left(  V_{t+1}^{\star}(S_{t+1}^{\prime})-m(S_{t+1}%
^{\prime};x,a)\right)  ^{2}\right] =
\\
\arginf_{m\in\mathcal{M}_{t+1,x,a}}\mathrm{Var}_{S_{t+1}^{\prime}\sim P_{t+1}%
(\cdot|x,a)}\left[V_{t+1}^{\star}(S_{t+1}^{\prime})-m(S_{t+1}^{\prime};x,a)\right].
\label{eq:optim-dual-step}%
\end{multline}
By generating a sample $Y_{1}^{x,a},\ldots,Y_{N}^{x,a}$ from $P_{t+1}%
(\cdot|x,a)$ we readily obtain a computable approximation of $m_{t+1}^{\star
}(\cdot;x,a),$ that is, the solution of (\ref{eq:optim-dual-step}), by
\begin{equation}
\arginf_{m\in\mathcal{M}_{t+1,x,a}^{\prime}}\left\{  \frac{1}{N}\sum_{i=1}%
^{N}\left(  V_{t+1}(Y_{i}^{x,a})-m(Y_{i}^{x,a})\right)  ^{2}\right\}  ,
\label{eq:optim-dual-m}%
\end{equation}
where $\mathcal{M}_{t+1,x,a}^{\prime}$ is some \textquotedblleft large
enough\textquotedblright\ finite-dimensional subset of $\mathcal{M}%
_{t+1,x,a}.$

Let us now discuss possible constructions of the martingale functions $m$
satisfying \eqref{eq:m-zeromean}. Assume that $\mathsf{S}\subseteq
\mathbb{R}^{d}$ and that the conditional distribution $P_{t+1}(\cdot|x,a)$
possesses a smooth density $p_{t+1}(\cdot|x,a)$ with respect to the Lebesgue
measure on $\mathbb{R}^{d}.$ Furthermore, assume that $p_{t+1}(\cdot|x,a)$ doesn't vanish
on any compact set in $\mathbb{R}^{d},$ and that $p_{t+1}(y|x,a)$
$\rightarrow$ $0$ for $|y|$ $\rightarrow$ $\infty.$ Now consider, for any fixed $(x,a),$
 functions of the form
\[
m_{t+1,\phi}(\cdot;x,a):=\left\langle \nabla\log(p_{t+1}(\cdot|x,a)),\phi
\right\rangle +\mathrm{div}(\phi)
\]
with $\phi:$ $\mathsf{S}\rightarrow\mathbb{R}^{d}$ being a smooth and bounded
mapping with bounded derivatives. It is then not difficult to check that
\[
\int_{\mathsf{S}}p_{t+1}(y|x,a)\phi_{i}(y)\partial_{y_{i}}\log(p_{t+1}%
(y|x,a))\,dy=-\int_{\mathsf{S}}p_{t+1}(y|x,a)\partial_{y_{i}}\phi
_{i}(y)\,dy,\quad i=1,\ldots,d,
\]
and hence $m_{t+1,\phi}$ satisfies (\ref{eq:m-zeromean}) for all
$(x,a)\in\mathsf{S}\times\mathsf{A}$. This means that in
(\ref{eq:optim-dual-m}), we can take $\mathcal{M}_{t+1,x,a}^{\prime
}=\{m_{t+1,\phi}\left(  \cdot;x,a\right)  :\,\phi\in\Phi\}$ where $\Phi$ is
the linear space of mappings $\mathbb{R}^{d}\rightarrow\mathbb{R}^{d},$ which
are smooth, bounded, and with bounded derivatives. Since $\phi\rightarrow
m_{t+1,\phi}(\cdot;x,a)$ is linear in $\phi$ we moreover have that
$\mathcal{M}_{t+1,x,a}^{\prime}$ is a linear space of real valued functions.
So the problem (\ref{eq:optim-dual-m}) can be casted into a standard linear
regression problem after choosing a system of basis functions $(m_{t+1,\varphi
_{k}}\left(  \cdot;x,a\right)  )_{k\in\mathbb{N}}$ due to some basis
$(\varphi_{k})_{k\in\mathbb{N}}$ in $\Phi.$
Needles to say that the problem
(\ref{eq:optim-dual-m}) can only be solved on some finite grid, $(x_{l}%
,a_{l})_{l=1,...,L}\in\mathsf{S}\times\mathsf{A}$\ say, yielding solutions
$\phi_{k}(\cdot):=\phi(\cdot;x_{k},a_{k})$ and the corresponding martingale
functions $m_{t+1,\phi_{k}}\left(  \cdot;x_{k},a_{k}\right)  $. In order to
obtain a martingale function $m_{t+1}\equiv m_{t+1}\left(  \cdot;x,a\right)  $
for a generic pair $(x,a)$ we may apply some suitable interpolation procedure.
Loosely speaking, if $(x,a)$ is an interpolation between $(x_{k},a_{k})$ and
$(x_{k^{\prime}},a_{k^{\prime}})$ we may interpolate $\phi(\cdot;x,a)$ between
$\phi_{k}$ and $\phi_{k^{\prime}}$ correspondingly, and set $m_{t+1}%
=m_{t+1,\phi}\left(  \cdot;x,a\right)  .$ For details regarding suitable
interpolation procedures we refer to Section~\ref{convup}.
\par
Let now, for each $t\in\lbrack H[$, and $(x,a)\in\mathsf{S}\times\mathsf{A}$,
the martingale function $m_{t+1}(\cdot;x,a)$ be an approximate solution of
(\ref{eq:optim-dual-m}). Then we can construct an upper bound (upper biased
estimate) for $V_{0}^{\star}(x_{0}),$ via a standard Monte Carlo
estimate of the expectation
\begin{equation}\label{up_m}
V_{0}^{\mathrm{up}}(x)=\mathbb{E}%
_{\boldsymbol{\pi},x}\left[  \sup_{a_{\geq0}\in\mathsf{A}^{H}%
}\left(  \sum_{t=0}^{H-1}\left(  R_{t}(S_{t}(a_{\geq t}),a_{t})-m_{t+1}%
(S_{t+1}(a_{\leq t});S_{t}(a_{<t}),a_{t})\right)  +F(S_{H})\right)  \right]  .
\end{equation}
\par
Another way of constructing $\boldsymbol{\xi}\in\Xi$ is based on the assumption that the
chain $(S_{t}(a_{<t}))$  comes from  the system of the so-called \textit{random iterative
functions}:
\begin{equation}
S_{t}=\mathcal{K}_{t}(S_{t-1},a_{t-1}, \varepsilon
_{t}),\quad t\in]H], \label{eq:iterfunc-repr}%
\end{equation}
where $\mathcal{K}_{t}:$ $\mathsf{S}\times\mathsf{A}\times\mathsf{E}%
\rightarrow\mathsf{S},$ is a measurable map with $\mathsf{E}$ being a
measurable space, and $(\varepsilon_{t},t\in]H])$ is an i.i.d. sequence of
$\mathsf{E}$-valued random variables defined on a probability space
$(\Omega,\mathcal{F},\mathrm{P}).$ In this setup we may consider as the underlying
probability space  $\Omega
:=\left(  \mathsf{E}\times\mathsf{A}\right)  ^{[H]}$ instead of \eqref{omegaset}, with accordingly modified
definitions of $\mathcal{F}$ and $\left( \mathcal{F}_{t}\right) .$
\par
Let $\mathcal{P}_{\mathsf{E}}$ be the distribution of $\varepsilon_{1}$ on
$\mathsf{E},$ and assume that $(\psi_{k},\,k\in\mathbb{N}_{0})$ is a
 system in $L^{2}(\mathsf{E},\mathcal{P}_{\mathsf{E}})$ satisfying
\[
\int\psi_{k}(\varepsilon)\,d\mathcal{P}_{\mathsf{E}}=0, \quad k \in \mathbb{N}.
\]
By then letting
\begin{equation}
\eta_{t+1,K}(x,a)\equiv\eta_{t+1,K}(x,a,\varepsilon_{t+1})=\sum_{k=1}^{K}%
c_{k}(x,a)\psi_{k}(\varepsilon_{t+1})\label{etamar}%
\end{equation}
for some natural $K>0$ and ``nice'' functions
$c_k:$ $\mathsf{S}\times\mathsf{A}$ $\rightarrow$
$\mathbb{R},$ $k=1,\ldots,K,$ we have that
\[
\xi_{t+1,K}(a_{\leq t}):=\eta_{t+1,K}(S_{t}(a_{<t}),a_{t})
\]
is $\mathcal{F}_{t+1}$-measurable, and, since $\int\psi_{k}(\varepsilon
)\, d\mathcal{P}_{\mathsf{E}}(\varepsilon)=0$ for $k\in\mathbb{N}$, it holds that
$\mathbb{E}\left[  \left.  \xi_{t+1,K}(a_{\leq t})\right\vert \mathcal{F}%
_{t}\right]  =0.$ Hence, we have that $\boldsymbol{\xi}_{K}$ $=$ $(\xi
_{t+1,K}(a_{\leq t}),t\in\lbrack H[)\in\Xi.$
In this case, we can  consider the least-squares problem
\begin{equation}
\inf_{(c_{1},\ldots,c_{K})}\mathbb{E}\left[  \left(  V_{t+1}(Z^{x,a}%
)-\sum_{k=1}^{K}c_{k}\psi_{k}(\varepsilon_{t+1})\right)  ^{2}\right]  ,\quad
Z^{x,a}\equiv\mathcal{K}_{t+1}(x,a,\varepsilon_{t+1}),\label{alt}%
\end{equation}
for estimating the coefficients in (\ref{etamar}). Let us further denote
$\Sigma_{\mathsf{E},K}:=\mathbb{E}_{\varepsilon\sim\mathcal{P}_{\mathsf{E}}%
}\left[  \boldsymbol{\psi}_{K}(\varepsilon)\boldsymbol{\psi}_{K}^{\top
}(\varepsilon)\right]  $ with $\boldsymbol{\psi}_{K}(\varepsilon):=[\psi
_{1}(\varepsilon),\ldots,\psi_{K}(\varepsilon)]^\top.$ The minimization problem
(\ref{alt}) is then explicitly solved by%
\begin{equation}
\bar{\mathbf{c}}_{K}(x,a):=\Sigma_{\mathsf{E},K}
^{-1}\mathbb{E}\left[  V_{t+1}(Z^{x,a})\boldsymbol{\psi}_{K}(\varepsilon
)\right].  \label{cbalke}%
\end{equation}
In the sequel  we assume that  $\Sigma_{\mathsf{E},K}$ is known and invertible. This assumption is not particularly restrictive, as we choose the basis \(\boldsymbol{\psi}\) ourselves.   In order to compute \eqref{cbalke}, we can
construct a new sample $U_{m}(x,a)=V_{t+1}(Z_{m}^{x,a})\Sigma
_{\mathsf{E},K}  ^{-1}\boldsymbol{\psi}_{K}(\varepsilon_m)$ with
$\varepsilon_m\sim\mathcal{P}_{\mathsf{E}},$  \(Z_m^{x,a}\equiv\mathcal{K}_{t+1}(x,a,\varepsilon_m),\) $m=1,\ldots,M,$ and estimate its
mean
$\bar{\mathbf{c}}_{K}(x,a)$ by the empirical mean \begin{equation}\label{cKN}
{\mathbf{c}%
}_{K,M}(x,a)=
[c_{1,M}(x,a),\ldots,c_{K,M}(x,a)]^\top
:=
\frac{1}{M}\sum_{m=1}^{M}U_{m}(x,a).
\end{equation}
We so obtain as martingale functions in
(\ref{etamar}),
\begin{equation}\label{etamar1}
{\eta}_{t+1,K,M}:={\mathbf{c}}%
_{K,M}^{\top}(x,a)\boldsymbol{\psi}_{K}(\varepsilon_{t+1})=\sum_{k=1}^K c_{k,M}(x,a)\psi_k(\varepsilon_{t+1}).
\end{equation}
Also note that the problem (\ref{alt}) may only numerically be solved on a grid, and a suitable
interpolation procedure is required to obtain (\ref{etamar1}) for
generic $(x,a)\in\mathsf{S}\times\mathsf{A}$ (for details see Section~\ref{convup}). Finally, an
upper biased upper bound for $V_{0}^{\star}(x)$ can be obtained via an
independent standard Monte Carlo estimate of the expectation
\begin{equation}\label{up_eta}
V_{0}^{\mathrm{up}}(x)=\mathbb{E}
_{\boldsymbol{\pi},x}\left[  \sup_{a_{\geq0}\in\mathsf{A}^{H}%
}\left(  \sum_{t=0}^{H-1}\left(  R_{t}(S_{t}(a_{\geq t}),a_{t})-\eta
_{t+1,K,M}(S_{t}(a_{<t}),a_{t})\right)  +F(S_{H})\right)  \right]  .
\end{equation}
In Section~\ref{convup} we will give a detailed convergence analysis of the dual estimator (\ref{up_eta}). It is anticipated that a similar analysis can be carried out
for the dual estimator (\ref{up_m}), but this analysis is omitted due to space restrictions.

\section{Convergence analysis of the primal algorithm }
\label{sec:conv-value}
In this section, we carry out the convergence analysis of the primal algorithm designed
in Section~\ref{ValAppr}, under some mild assumptions.
\begin{assumption}\label{ass:S}
Assume that (\ref{eq:iterfunc-repr}) holds. In this case $P_{h}^{a}f(x)=\mathbb{E}_{\varepsilon\sim\mathcal{P}_{\mathsf{E}}}[f(\mathcal{K}_{h}(x,a,\varepsilon))]$,
$(x,a)\in\mathsf{S}\times\mathsf{A}.$ Also assume that the kernels \(\mathcal{K}_{h}\) are Lipschitz continuous:
\begin{equation}
|\mathcal{K}_{h}(x,a,\varepsilon)-\mathcal{K}_{h}(x^{\prime},a^{\prime
},\varepsilon)|\leq L_{\mathcal{K}}\,\rho((x,a),(x^{\prime},a^{\prime}%
)),\quad(x,a),(x^{\prime},a^{\prime})\in\mathsf{S}\times\mathsf{A}%
,\quad\varepsilon\in\mathsf{E},\label{norm}%
\end{equation}
for some constant $L_{\mathcal{K}}$ not depending on $h.$ In (\ref{norm}), the
metric $\rho\equiv\rho_{\mathsf{S}\times\mathsf{A}}$ on $\mathsf{S}%
\times\mathsf{A}$ is considered to be of the form%
\[
\rho_{\mathsf{S}\times\mathsf{A}}((x,a),(x^{\prime},a^{\prime}))=\left\Vert
\left(  \rho_{\mathsf{S}}(x,x^{\prime}),\rho_{\mathsf{A}}(a,a^{\prime
})\right)  \right\Vert ,
\]
where $\rho_{\mathsf{S}}$ and $\rho_{\mathsf{A}}$ are suitable metrics on
$\mathsf{S}$ and $\mathsf{A}$, respectively, and $\left\Vert \left(
\cdot,\cdot\right)  \right\Vert $ is a fixed but arbitrary norm on
$\mathbb{R}^{2}.$ In order to avoid an overkill of notation, we will henceforth
drop the subscripts $\mathsf{S}$, $\mathsf{A}$, and $\mathsf{S}\times
\mathsf{A}$, whenever it is clear from the arguments which metric is considered.
\end{assumption}
\begin{assumption}
\label{ass:R}
Assume that \(\sup_{(x,a)\in \mathsf{S}\times\mathsf{A}}\{|R_{h}(x,a)|\vee |F(x)|\}\leq R_{\max}\) and
\begin{eqnarray*}
\sup_{a\in \mathsf{A}}|R_{h}(x,a)-R_{h}(x',a)|\leq L_R\rho(x,x')
\end{eqnarray*}
for some constants \(R_{\max}\) and \(L_R\) not depending on \(h\in [H[.\)
\end{assumption}
We now set
\begin{equation}
\widetilde{L}_{h}:=(H-h+1)R_{\max},\text{ \ \ }h\in\lbrack H],\text{
\ \ }V_{\max}^{\star}:=\widetilde{L}_{0}=(H+1)R_{\max}.\label{Lt}%
\end{equation}

\begin{assumption}
\label{ass:gamma}
Assume that \(|\Sigma_{h,K}^{-1}\boldsymbol{\gamma}_K(x)|_{\infty}\leq \Lambda_K\)  for all \(x\in \mathsf{S}\), $h\in[H[,$ and
\begin{eqnarray*}
|\boldsymbol{\gamma}_K(x)-\boldsymbol{\gamma}_K(x')|\leq L_{\gamma,K}\rho (x,x')
\end{eqnarray*}
for a constant \(L_{\gamma,K}>0,\) where $|\cdot|$ denotes the Euclidian norm and \(|\cdot|_\infty\) stands for the \(\ell_\infty\)  norm.
\end{assumption}
Note that due to (\ref{lowconst}) and (\ref{Lt}) one has that $\left\vert V_{h,N}\right\vert
\leq\widetilde{L}_{h},$ $h\in\lbrack H],$ and that under Assumptions~\ref{ass:S}%
,~\ref{ass:R}, and \ref{ass:gamma} one has
\begin{eqnarray*}
|T_{h,N}V_{h+1,N}(x)-T_{h,N}V_{h+1,N}(x')|&\leq &  L_R\rho (x,x')+\sup_{a\in \mathsf{A}}|\widetilde P_{h+1,N}^{a}V_{h+1,N}(x)-\widetilde P_{h+1,N}^{a}V_{h+1,N}(x')|
\\
&\leq &L_R\rho (x,x')+\sup_{a\in \mathsf{A}}|\beta_{N,a}||\boldsymbol{\gamma}_K(x)-\boldsymbol{\gamma}_K(x')|
\\
&\leq & L_R\rho (x,x')+\frac{1}{N}\sum_{n=1}^{N}\sup_{a\in \mathsf{A}}|Z_{n}^{a}||\Sigma^{-1}_{h,K}\boldsymbol{\gamma}_K(X_{n})||\boldsymbol{\gamma}_K(x)-\boldsymbol{\gamma}_K(x')|
\\
&\leq & [L_R+V^{\star}_{\max}\Lambda_K\sqrt{K}  L_{\gamma,K}]\rho (x,x').
\end{eqnarray*}
%with \(V^{\star}_{\max}:= HR_{\max}.\)
Let us denote
 \(L_{V,K}:= L_R+V^{\star}_{\max}\Lambda_K  L_{\gamma,K}\sqrt{K}.\) The above estimates imply that \(V_{h,N}\in \mathrm{Lip}(L_{V,K}),\) and so the function \(f(x,a,\varepsilon) := V_{h,N}(\mathcal{K}_{h}(x,a,\varepsilon))\) satisfies
\begin{eqnarray}
\label{eq:flip}
\left|f(x,a,\varepsilon)-f(x',a',\varepsilon)\right|\leq L_{V,K}L_{\mathcal{K}}\rho((x,a),(x',a'))
\end{eqnarray}
The next assumption concerns the measures \(\mu_1,\ldots,\mu_H.\)
\begin{assumption}
\label{ass:mu}
Consider for any $h<l$ the Radon-Nikodym derivative
\begin{align*}
\mathfrak{R}_{h,l}(x^{\prime}|x,\boldsymbol{\pi}) &  :=\frac{P_{h+1}^{\pi_{h}%
}\ldots P_{l}^{\pi_{l-1}}(dx^{\prime}|x)}{\mu_l(dx^{\prime})}
\end{align*}
where for a generic policy $\boldsymbol{\pi}=(\pi_1,\ldots,\pi_H),$
\[
P_{h+1}^{\pi_{h}}(dx^{\prime}|x):=P_{h+1}(dx^{\prime}|x,\pi_{h}(x)).
\]
Assume that
\begin{equation}
\mathfrak{R}^{\max}:=\sup_{0\leq h<l<H,\boldsymbol{\pi}}\left(  \int%
\mu_h(dx)\int\mathfrak{R}_{h,l}^{2}(x^{\prime}|x,\boldsymbol{\pi})\mu_l
(dx^\prime)\right)  ^{1/2}<\infty.\label{Rinf}
\end{equation}
\end{assumption}
By the very construction of $V_{h,N}$ from $V_{h+1,N},$ $h\in\lbrack H[,$ as
outlined in Section~\ref{ValAppr}, $V_{h,N}$ may be seen as random (Lipschitz
continuous) function. In particular, for each $x\in\mathsf{S,}$ $V_{h,N}(x)$  is
measurable with respect to the $\sigma$-algebra
\begin{equation}\label{sampdata}
\mathcal{D}_{h}^{N}:=\sigma\bigl\{  \mathbf{Y}^{h;N},\ldots,\mathbf{Y}%
^{H-1;N}\bigr\}  \text{ \ \ with \ \ }\mathbf{Y}^{h;N}:=\bigl(  \bigl(
X_{1}^{h},\varepsilon_{1}^{h}\bigr)  ,\ldots,\bigl(  X_{N}^{h},\varepsilon
_{N}^{h}\bigr)  \bigr)
\end{equation}
where the pairs $\left(  X_{i}^{h},\varepsilon_{i}^{h}\right)  \sim\mu
_{h}\otimes\mathcal{P}_{\mathsf{E}}$ are i.i.d. for $h\in\lbrack H[,$
$i=1,\ldots,N,$ and Monte Carlo simulated under the measure
$\mathsf{P}\equiv\mathsf{P_N}:=\left(\mu_{h}\otimes\mathcal{P}_{\mathsf{E}}\right)^{\otimes HN}$.
\par
The following theorem provides an upper bound for the difference between \(V_{h,N}\) and \(V_{h}^{\star}.\)
\begin{theorem}
\label{thm:prim-bound}
Suppose that $\mathbb{E}_{X\sim\mu_h}\left[  |\boldsymbol{\gamma}_{K}%
(X)|^{2}\right]  \leq\varrho_{\gamma,K}^{2}$ for all $h\in[H[$. Then for $ h\in\lbrack H]$,
\begin{multline*}
 \Vert V_{h}^{\star}(\cdot)-V_{h,N}(\cdot)\Vert_{L^{2}(\mu_h\otimes\mathsf{P})}\\
\lesssim \mathfrak{R}^{\max}\left(  (H-h)\varrho_{\gamma,K}
\Lambda_K(L_{V,K}L_{\mathcal{K}}I_{\mathcal{D}}(\mathsf{A})+L_{V,K}L_{\mathcal{K}%
}\mathsf{D}(\mathsf{A})+ V_{\max}^{\star})
\sqrt{\frac
{K}{N}}+\sum_{l=h}%
^{H-1}\mathcal{R}_{K,l}\right)  ,
\end{multline*}
where
$\lesssim$ denotes $\leq$ up to an absolute constant,
$I_{\mathcal{D}}(\mathsf{A})$ is the metric entropy of $\mathsf{A}$,
$\mathsf{D}(\mathsf{A})$ is the diameter of $\mathsf{A}$ as defined in
Appendix~\ref{notation}, and%
\begin{align*}
\mathcal{R}_{K,h}  &  :=\sup_{\boldsymbol{\zeta}\in\mathbb{R}^{K\times
|\mathsf{A}|}}\mathbb{E}_{X\sim\mu_h}\left[  \sup_{a\in\mathsf{A}}\left(
\beta_{a,\boldsymbol{\zeta}}^{\top}\boldsymbol{\gamma}_{K}(X)-P_{h+1}%
^{a}V_{h+1,\boldsymbol{\zeta}}(X)\right)  ^{2}\right]  ^{1/2},
\end{align*}
where
\begin{align*}
\beta_{a,\boldsymbol{\zeta}}  &  :=\arginf_{\beta\in\mathbb{R}^{K}}%
\mathbb{E}_{X\sim\mu_h}\left[  \left(  \beta^{\top}\boldsymbol{\gamma}%
_{K}(X)-P_{h+1}^{a}V_{h+1,\boldsymbol{\zeta}}(X)\right)  ^{2}\right]
\end{align*}
with
\begin{align*}
V_{h,\boldsymbol{\zeta}}(x)  &  :=\sup_{a\in\mathsf{A}}\bigl(R_{h}%
(x,a)+\mathcal{T}_{\widetilde{L}_{h+1}}[  \zeta_{a}^{\top}\boldsymbol{\gamma
}_{K}(x)]  \bigr)\text{ \ for }0\leq h<H, \quad
V_{H,\boldsymbol{\zeta}}(x):=F(x).
\end{align*}
\end{theorem}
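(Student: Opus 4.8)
The plan is to reduce the global error to a sum of one-step regression errors via a change-of-measure argument, and then to bound each one-step error by a bias term and a stochastic term. Throughout, write $e_h:=V_h^{\star}-V_{h,N}$ and let $\varepsilon_h:=T_hV_{h+1,N}-V_{h,N}=T_hV_{h+1,N}-T_{h,N}V_{h+1,N}$ denote the one-step Bellman error. Because $V_{h+1,N}$ is $\mathcal{D}_{h+1}^{N}$-measurable whereas the fresh level-$h$ sample $\mathbf{Y}^{h;N}$ is independent of $\mathcal{D}_{h+1}^{N}$, I would condition on $\mathcal{D}_{h+1}^{N}$ throughout the one-step analysis, so that $V_{h+1,N}$ acts as a fixed function in $\mathrm{Lip}(L_{V,K})$ and the pseudo-regression uses i.i.d.\ samples independent of it. The starting point is the pointwise recursion $|e_h|\le\sup_{a\in\mathsf{A}}|P_{h+1}^{a}e_{h+1}|+|\varepsilon_h|$, which follows from $V_h^{\star}=T_hV_{h+1}^{\star}$ together with the contraction $|\sup_aF(a)-\sup_aG(a)|\le\sup_a|F(a)-G(a)|$ applied to the Bellman operator.

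Next I would unroll this recursion from the terminal condition $e_H=0$. Since each $P_{h+1}^{a}$ is a positive operator, $\sup_a|P_{h+1}^{a}g|\le\sup_aP_{h+1}^{a}|g|$, and iterating gives
\[
|e_h(x)|\le\sum_{l=h}^{H-1}\sup_{\boldsymbol{\pi}}\bigl[P_{h+1}^{\pi_h}\cdots P_{l}^{\pi_{l-1}}|\varepsilon_l|\bigr](x),
\]
where at each stage the supremum over actions is turned into a supremum over policies by measurable selection. Expressing each composed kernel through the Radon--Nikodym derivative $\mathfrak{R}_{h,l}(\cdot|x,\boldsymbol{\pi})$ of Assumption~\ref{ass:mu}, applying Cauchy--Schwarz to each summand, taking $L^2(\mu_h)$-norms in $x$ (again moving the policy supremum outside the integral by measurable selection), and using \eqref{Rinf}, I obtain
\[
\Vert e_h\Vert_{L^2(\mu_h\otimes\mathsf{P})}\le\mathfrak{R}^{\max}\sum_{l=h}^{H-1}\Vert\varepsilon_l\Vert_{L^2(\mu_l\otimes\mathsf{P})}.
\]
This is the crucial step that keeps the dependence on $\mathfrak{R}^{\max}$ (hence on the horizon) linear: transporting the level-$l$ error back to level $h$ in a single multi-step change of measure avoids the factor $(\mathfrak{R}^{\max})^{H-h}$ that a one-transition-at-a-time propagation would incur.

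It then remains to bound each $\Vert\varepsilon_l\Vert_{L^2(\mu_l\otimes\mathsf{P})}$. From $|\varepsilon_l|\le\sup_a|P_{l+1}^{a}V_{l+1,N}-\widetilde{P}_{l+1,N}^{a}V_{l+1,N}|$, and using that clipping at $\widetilde{L}_{l+1}$ is $1$-Lipschitz and leaves the target $P_{l+1}^{a}V_{l+1,N}$ (which lies in $[-\widetilde{L}_{l+1},\widetilde{L}_{l+1}]$ since $|V_{l+1,N}|\le\widetilde{L}_{l+1}$) unchanged, I would split
\[
|P_{l+1}^{a}V_{l+1,N}-\widetilde{P}_{l+1,N}^{a}V_{l+1,N}|\le|(\beta_{N,a}-\beta_a)^{\top}\boldsymbol{\gamma}_{K}|+|\beta_a^{\top}\boldsymbol{\gamma}_{K}-P_{l+1}^{a}V_{l+1,N}|.
\]
The second term is a bias term: $\beta_a=\mathbb{E}[\beta_{N,a}]$ is exactly the coefficient of the $L^2(\mu_l)$-projection of $P_{l+1}^{a}V_{l+1,N}$ onto $\mathrm{span}(\boldsymbol{\gamma}_K)$, and conditionally on $\mathcal{D}_{l+1}^{N}$ one has $V_{l+1,N}=V_{l+1,\boldsymbol{\zeta}}$ for the realized coefficient matrix $\boldsymbol{\zeta}$; hence taking the supremum over $a$, the $L^2(\mu_l)$-norm, and then the supremum over $\boldsymbol{\zeta}$ bounds this contribution by $\mathcal{R}_{K,l}$, deterministically. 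The first term is the centered pseudo-regression fluctuation.

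Bounding the stochastic term uniformly over $a$ is the main obstacle. I would view $\sup_a|(\beta_{N,a}-\beta_a)^{\top}\boldsymbol{\gamma}_K(x)|$ as the supremum of an empirical process indexed by $a\in\mathsf{A}$. The key structural fact, made possible by the representation \eqref{eq:iterfunc-repr} with the same noise shared across actions, is that $a\mapsto V_{l+1,N}(\mathcal{K}_{l+1}(x,a,\varepsilon))$ is Lipschitz with constant $L_{V,K}L_{\mathcal{K}}$ by \eqref{eq:flip}, so that the class indexed by $\mathsf{A}$ has controlled complexity and a chaining (Dudley) argument, the auxiliary empirical-process result of Appendix~B, applies. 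The entropy integral produces the factor $L_{V,K}L_{\mathcal{K}}I_{\mathcal{D}}(\mathsf{A})$, the span of the chain the factor $L_{V,K}L_{\mathcal{K}}\mathsf{D}(\mathsf{A})$, and the base point the factor $V_{\max}^{\star}$ (via $|V_{l+1,N}|\le V_{\max}^{\star}$), while the weights $\boldsymbol{\gamma}_K(X_i)^{\top}\Sigma^{-1}\boldsymbol{\gamma}_K(x)$ contribute the envelope factor $\varrho_{\gamma,K}\Lambda_K\sqrt{K}$ through $\mathbb{E}_{X\sim\mu_l}|\boldsymbol{\gamma}_K(X)|^2\le\varrho_{\gamma,K}^2$ and $|\Sigma^{-1}\boldsymbol{\gamma}_K(x)|_{\infty}\le\Lambda_K$, and the $N$ samples the rate $N^{-1/2}$. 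This yields, for each $l$, a stochastic bound of order $\varrho_{\gamma,K}\Lambda_K(L_{V,K}L_{\mathcal{K}}I_{\mathcal{D}}(\mathsf{A})+L_{V,K}L_{\mathcal{K}}\mathsf{D}(\mathsf{A})+V_{\max}^{\star})\sqrt{K/N}$. Since this bound is identical for every $l$, summing the $H-h$ stochastic terms together with the bias terms $\mathcal{R}_{K,l}$ inside the propagation estimate reproduces exactly the asserted inequality.
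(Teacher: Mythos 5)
Your proposal is correct and follows essentially the same route as the paper's proof: the same one-step decomposition into a chaining-based stochastic term (Proposition~\ref{prop:unif-exp} applied componentwise to $f(x,a,\varepsilon)=V_{h+1,N}(\mathcal{K}_{h+1}(x,a,\varepsilon))\Sigma^{-1}\boldsymbol{\gamma}_{K}(x)$, conditionally on $\mathcal{D}_{h+1}^{N}$) plus the projection bias $\mathcal{R}_{K,l}$, the same handling of clipping via the $1$-Lipschitz truncation fixing the target, and the same single multi-step change of measure producing one factor $\mathfrak{R}^{\max}$ rather than $(\mathfrak{R}^{\max})^{H-h}$. The only immaterial difference is that you derive the error propagation through the sup-contraction $|\sup_a F(a)-\sup_a G(a)|\le\sup_a|F(a)-G(a)|$ combined with a measurable-selection step, whereas the paper obtains the same recursion from two one-sided bounds driven by the explicit policies $\boldsymbol{\pi}^{\star}$ and the greedy policy $\boldsymbol{\pi}_{\cdot,N}$, which sidesteps the abstract selection argument.
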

\paragraph{Discussion}
\begin{itemize}
\item The quantity \(\mathcal{R}_{K,h}\)  is related  to the error of approximating the conditional expectation \(P_{h+1}^{a}V_{h+1,\boldsymbol{\zeta}}\) via a linear combination of the basis functions \(\gamma_1,\ldots,\gamma_K\) in a worst case scenario, that is, for the most unfavorable  choice of  \(\boldsymbol{\zeta}.\)
Let us suppose, for illustration, that $\mathsf{A}$ is finite and take some $h<H-1.$ One then has%
\begin{align}
\mathcal{R}_{K,h}  & \leq\sum_{a\in\mathsf{A}}\sup_{\boldsymbol{\zeta}%
\in\mathbb{R}^{K\times|\mathsf{A}|}}\mathbb{E}_{X\sim\mu_{h}}\left[  \left(
\beta_{a,\boldsymbol{\zeta}}^{\top}\boldsymbol{\gamma}_{K}(X)-P_{h+1}%
^{a}V_{h+1,\boldsymbol{\zeta}}(X)\right)  ^{2}\right]  ^{1/2}
\label{conserv}
\end{align}
where $\beta_{a,\boldsymbol{\zeta}}^{\top}\boldsymbol{\gamma}_{K}$ is the
$L^{2}(\mu_{h})$ projection of $P_{h+1}^{a}V_{h+1,\boldsymbol{\zeta}}$ on
$\rm{span}(\boldsymbol{\gamma}_{K})$ with the corresponding projection error%
\begin{equation}
\mathcal{E}_{K,h}(a,\boldsymbol{\zeta}):=\mathbb{E}_{X\sim\mu_{h}}\left[\left(  \beta_{a,\boldsymbol{\zeta}}^{\top
}\boldsymbol{\gamma}_{K}(X)-P_{h+1}^{a}V_{h+1,\boldsymbol{\zeta}}(X)\right)
^{2}\right]  ^{1/2}.\label{prerror}%
\end{equation}
Under mild conditions on $P_{h+1}^{a}$, (\ref{prerror})  converges to zero
uniformly in $\boldsymbol{\zeta,}$ at a rate depending on the choice of
$\boldsymbol{\gamma}_{K}.$ For example, if the system \(\gamma_1,\gamma_2,\ldots\) is
{an orthonormal base} in $L^{2}(\mu_{h})$ then \(\max_{a\in \Aset}\sup_{\boldsymbol{\zeta}%
\in\mathbb{R}^{K\times|\mathsf{A}|}}\mathcal{E}_{K,h}(a,\boldsymbol{\zeta})\lesssim K^{-\beta}\),
{$\beta>0$,} provided that the series
\begin{eqnarray*}
 \sum_{k=1}^\infty k^{2\beta} \,\mathbb{E}_{X\sim \mu_h} [\gamma_{k}(X)P_{h+1}^{a}V_{h+1,\boldsymbol{\zeta}}(X)]^2
\end{eqnarray*}
is uniformly bounded in \(\boldsymbol{\zeta}%
\in\mathbb{R}^{K\times|\mathsf{A}|}\) and \(a\in \Aset.\)
Hence, then $\mathcal{R}_{K,h}$ $\lesssim
|\Aset|K^{-\beta}$ $\rightarrow0$ for
$K\rightarrow\infty.$ Note that  (\ref{conserv}%
) {is a worst case estimate}, which may be very rough in general.

\item  Suppose that \(P_{h+1}^{a}=:P_{h+1}\) does not
depend on \(a\in \Aset\), and that
  \(\gamma_1,\gamma_2,\ldots \) are bounded eigenfunctions (corresponding to nonnegative eigenvalues) of \(P_{h+1}\). Let further     \(F(x)=\beta^\top \boldsymbol{\gamma}_K(x)\) for some \(\beta\in \mathbb{R}^K\) and \(R_{t}(x,a)=R_{1,t}(x) R_{2,t}(a)\)
with \(R_{1,t}(x)=c^\top_t \boldsymbol{\gamma}_K(x)\geq 0,\)  then for \(\widetilde{L}_{h+1}\)  large enough,
\(\mathcal{R}_{K,h}=0\) (in this case we may take \(\zeta_{a}\) independent of \(a\) in the definition of \(V_{h+1,\boldsymbol{\zeta}}\)) and only the stochastic part of the error remains:
\begin{eqnarray}
\label{eq:zerobias}
\Vert V_{h}^{\star}-V_{h,N}\Vert_{L^{2}(\mu_h\otimes\mathsf{P})}
\lesssim  H\mathfrak{R}^{\max} \varrho_{\gamma,K}\Lambda_K
(L_{V,K}L_{\mathcal{K}}I_{\mathcal{D}}(\mathsf{A})+L_{V,K}L_{\mathcal{K}%
}\mathsf{D}(\mathsf{A})+ V_{\max}^{\star})
\sqrt{\frac{K}{N}}.
\end{eqnarray}
\item Let us consider the stochastic error \eqref{eq:zerobias} in more detail
for an example where \(\Aset=[0,1]^{\da}\) for some \(\da\in \mathbb{N}.\) One then has \(\mathsf{D}(\mathsf{A})=\sqrt{\da}\) and \(I_{\mathcal{D}}(\mathsf{A})\lesssim \sqrt{\da}.\) In this example the bound \eqref{eq:zerobias} depends sub-linearly in \(\da.\) If in addition all basis functions \((\gamma_k)\)  are uniformly bounded and the infinity matrix norm (i.e. the maximum absolute row sum) of \(\Sigma_{h,K}\)
is uniformly bounded from below
for all \(K\in \mathbb{N}\) and \(h\in [H],\)
then \(\varrho_{\gamma,K}  \lesssim K^{1/2},\) \(\Lambda_K\lesssim 1,\) \(L_{V,K}\lesssim L_{\gamma,K}H K^{1/2}\), $V_{\max}^{\star}\lesssim H$, and %\eqref{eq:zerobias}
the bound in Theorem~\ref{thm:prim-bound}
transforms to
\begin{eqnarray}
\label{eq:zerobias1}
\Vert V_{h}^{\star}-V_{h,N}\Vert_{L^{2}(\mu_h\otimes\mathsf{P})}
\lesssim
\frac{(H-h)H\mathfrak{R}^{\max}\sqrt{\da}
L_{\gamma,K} K^{3/2}}{\sqrt{N}}
+\mathfrak{R}^{\max}\sum_{l=h}%
^{H-1}\mathcal{R}_{K,l},
\end{eqnarray}
where \(\lesssim\) means inequality up to a constant not depending on \(H,\) \(N,\) \(K\) and \(\Aset.\)
{Another relevant situation is the case of finite  \(\Aset.\) Here \( I_{\mathcal{D}}(\mathsf{A})=\sqrt
{\log|\mathsf{A}|}\) and \(\mathsf{D}(\mathsf{A})=1.\) Hence \eqref{eq:zerobias1} changes to
 \begin{eqnarray}
\label{eq:zerobias2}
\Vert V_{h}^{\star}-V_{h,N}\Vert_{L^{2}(\mu_h\otimes\mathsf{P})}
\lesssim
\frac{(H-h)H\mathfrak{R}^{\max}\sqrt
{\log|\mathsf{A}|}
L_{\gamma,K} K^{3/2}}{\sqrt{N}}
+\mathfrak{R}^{\max}\sum_{l=h}%
^{H-1}\mathcal{R}_{K,l}.
\end{eqnarray}
Let us point out to a logarithmic dependence of \eqref{eq:zerobias2} on \(|\mathsf{A}|.\)
}
\item Let us remark on Assumption~\ref{ass:mu} and discuss the quantity \(\mathfrak{R}^{\max}\). Consider $\mathsf{S}=\mathbb{R}^{d}$ and assume that the
transition kernels are absolutely continuous with respect  to the Lebesgue measure on \(\mathbb{R}^{d}\), that is,
\begin{gather*}
P_{h+1}^{\pi_{h}}\cdot\ldots \cdot P_{l}^{\pi_{l-1}}(dy|x)=p_{h+1}^{\pi_{h}}\cdot\ldots
\cdot p_{l}^{\pi_{l-1}}(y|x)\,dy.
\end{gather*}
Further assume that
\[
\sup_{0\leq h<l<H,\boldsymbol{\pi}}p_{h+1}^{\pi_{h}}\ldots p_{l}^{\pi_{l-1}%
}(y|x)\leq Ce^{-\alpha_H\left\vert y-x\right\vert ^{2}}\text{ \ for some }C,\alpha_H>0,
\]
and consider absolutely continuous reference measures $\mu_{h}(dx)=\mu
_{h}(x)\,dx.$ For the bound (\ref{Rinf}), we then have%
\begin{gather*}
\left(  \mathfrak{R}^{\max}\right)  ^{2}=\sup_{0\leq h<l<H,\boldsymbol{\pi}%
}\int\int\frac{\mu_{h}(x)}{\mu_{l}(y)}\left(  P_{h+1}^{\pi_{h}}\ldots
P_{l}^{\pi_{l-1}}(y|x)\right)  ^{2}\,dx \, dy\\
\leq C^{2}\max_{0\leq h<l<H}\int\int\frac{\mu_{h}(x)}{\mu_{l}(x+u)}%
e^{-2\alpha_H\left\vert u\right\vert ^{2}}\, dx\, du.
\end{gather*}
The latter expression can be easily bounded by choosing $\mu_{h}$ to be Gaussian
with an appropriate variance structure depending on $h.$ For example,
take $d=1$ and consider
\[
\mu_{h}(x)=\sqrt{\frac{{\alpha_H}}{{\pi (h+1)}}}e^{-\frac{\alpha_H}{h+1}x^{2}},\text{ \ \ }%
h\in[H[,
\]
then straightforward calculations yield
$$\mathfrak{R}^{\max}\leq
C\sqrt{\max_{0\leq h<l<H}\frac{(l+1)\pi}{\alpha_H\sqrt{2(l-h)-1}}}\leq C\sqrt{\frac
{H\pi}{\alpha_H}}.%
$$
If \(\alpha_H\) is polynomial in \(H,\) then   the bound of Theorem~\ref{thm:prim-bound} also grows polynomially in  \(H\) as opposed to the most bounds available in the literature. Also note that this bound is obtained under rather general assumptions on the sets \(\Xset\) and \(\Aset.\) In particular, we don't assume that either \(\Xset\) or \(\Aset\) is finite.
\end{itemize}

\section{Convergence analysis of the dual algorithm}
\label{convup}

\subsection{Convergence of martingale functions}

For the dual representation (\ref{up_eta}) we construct an $H$-tuple of
martingale functions $\boldsymbol{\widetilde{\eta}}:=(\widetilde{\eta}_{t+1,K,M}%
(x,a),\,t\in\lbrack H[),$ see  (\ref{etamar1})
as outlined in
Section~\ref{dualalg}, from a given pre-computed
$H$-tuple of approximate value functions $(V_{t+1,N},\,t\in\lbrack H[)$ based on sampled data  $\mathcal{D}^{N}_1$ (see (\ref{sampdata})), and a system of  \(K_\text{pr}\) basis functions $\boldsymbol{\gamma}_{K_\text{pr}}$, as outlined in
Section~\ref{ValAppr}.

Let us consider a fixed time $t\in\lbrack H[$ and suppress time subscripts where
notationally convenient. We fix two (random) grids $\mathsf{S}_{L}:=\{x_{1}%
,\ldots,x_{L}\}$ and $\mathsf{A}_{L}:=\{a_{1},\ldots,a_{L}\}$ on $\mathsf{S}$
and $\mathsf{A},$ respectively, and obtain  values of the coefficient functions \(c_{k,M}\) on \(\mathsf{S}_{L}\times \mathsf{A}_{L}\) due to $M$
simulations. Next, we construct
\[
\eta_{t+1,K,M}(x,a)\equiv\eta_{t+1,K,M}(x,a,\varepsilon)
=\mathbf{c}_{K,M}^{\top}%
(x,a)\boldsymbol{\psi}(\varepsilon)
=:\sum_{k=1}^{K}%
c_{k,M}(x,a)\psi_{k}(\varepsilon),
\]
for $(x,a)\in\mathsf{S}_{L}\times\mathsf{A}_{L}.$
To approximate $\eta_{t+1,K,M}(x,a)$ for $(x,a)\not \in \mathsf{S}_{L}%
\times\mathsf{A}_{L},$ we suggest to use an appropriate interpolation
procedure described below, which is particularly useful for our situation
where the function to be interpolated is only Lipschitz continuous (due to the
presence of the maximum). The \emph{optimal central interpolant} for a
function $f\in\mathrm{Lip}_{\rho}(\mathcal{L})$ on \(\mathsf{S}\times\mathsf{A}\) with respect to some metric
$\rho$ on $\mathsf{S}\times\mathsf{A}$ is defined as
\[
I[f](x,a):=(H_{f}^{\mathrm{low}}(x,a)+H_{f}^{\mathrm{up}}(x,a))/2,
\]
where
\begin{align*}
H_{f}^{\mathrm{low}}(x,a) &  :=\max_{(x^{\prime},a^{\prime})\in\mathsf{S}%
_{L}\times\mathsf{A}_{L}}(f(x^{\prime},a^{\prime})-\mathcal{L}\rho
((x,a),(x^{\prime},a^{\prime})),\\
H_{f}^{\mathrm{up}}(x,a) &  :=\min_{(x^{\prime},a^{\prime})\in\mathsf{S}%
_{L}\times\mathsf{A}_{L}}(f(x^{\prime},a^{\prime})+\mathcal{L}\rho
((x,a),(x^{\prime},a^{\prime})).
\end{align*}
Note that $H_{f}^{\mathrm{low}}(x,a)\leq f(x,a)\leq H_{f}^{\mathrm{up}}(x,a),$
$H_{f}^{\mathrm{low}},H_{f}^{\mathrm{up}}\in\mathrm{Lip}_{\rho}(\mathcal{L})$
and hence $I[f]\in\mathrm{Lip}_{\rho}(\mathcal{L}).$ An efficient algorithm to
compute the values of the interpolant $I[f]$ without knowing $\mathcal{L}$ in
advance can be found in \cite{beliakov2006interpolation}. The so constructed
interpolant achieves the bound
\begin{align}
\Vert f-I[f]\Vert_{\infty} &  \leq\mathcal{L}\rho_{L}(\mathsf{S}%
,\mathsf{A})\label{eq:interp-error}\\
&  :=\mathcal{L}\sup_{(x,a)\in\mathsf{S}\times\mathsf{A}}\min_{(x^{\prime
},a^{\prime})\in\mathsf{S}_{L}\times\mathsf{A}_{L}}\rho((x,a),(x^{\prime
},a^{\prime})).\nonumber
\end{align}
The quantity $\rho_{L}(\mathsf{S},\mathsf{A})$ is usually called covering
radius (also known as the mesh norm or fill radius) of $\mathsf{S}_{L}%
\times\mathsf{A}_{L}$ with respect to $\mathsf{S}\times\mathsf{A}$. We
set
\begin{equation}\label{etatil}
\widetilde{\eta}_{t+1,K,M}(x,a)\equiv
\widetilde{\eta}_{t+1,K,M}(x,a,\varepsilon):=\sum_{k=1}^{K}\widetilde{c}_{k,M}(x,a)\psi
_{k}(\varepsilon)\text{ \ \ with}\quad\widetilde{c}_{k,M}:=I[c_{k,M}].
\end{equation}
The coefficients $\widetilde{c}_{k,M}(x,a)$
in (\ref{etatil}) are considered as random, which
are measurable with respect to
$\mathcal{D}_{t+1}^{N}\vee \mathcal{G}_{t+1}^{M}$ with $\mathcal{G}%
_{t+1}^{M}:=\sigma \left\{ \widetilde{\varepsilon }_{1}^{t+1},\ldots ,%
\widetilde{\varepsilon }_{M}^{t+1}\right\} ,$ where $\widetilde{\varepsilon }%
_{m}^{t+1}\sim \mathcal{P}_{\mathsf{E}},$ $m=1,\ldots ,M$, $t\in [H[$, denote the i.i.d.
random  drawings used in  (\ref{cKN}). Let us denote the simulation measure (for both primal and dual) with $\mathsf{P}\equiv\mathsf{P}_{N,M}:=
\mathsf{P}_{N}\otimes \mathcal{P}_{\mathsf{E}}^{\otimes HM}$
(while slightly abusing notation) with \(\mathsf{P_N}=\left(\mu_{h}\otimes\mathcal{P}_{\mathsf{E}}\right)^{\otimes HN}\).

Furthermore, denote by ${\mathbf{c}}_{K}(x,a)$ $=$ $[{c}_{1}(x,a),\ldots,{c}_{K}(x,a)]^\top$ the unique solution of the
minimization problem
\begin{equation}\label{cK}
\inf_{c_{1},\ldots,c_{K}}\mathbb{E}_{\varepsilon\sim\mathcal{P}_{\mathsf{E}}%
}\left[\left(  V_{t+1}^{\star}(\mathcal{K}%
_{t+1}(x,a,\varepsilon))-\sum_{k=1}^{K}c_{k}\psi_{k}(\varepsilon)\right)  ^{2}\right]%
\end{equation}
for any $(x,a)\in\mathsf{S}\times\mathsf{A,}$ and define \(
{\eta}_{t+1,K}(x,a):=\mathbf{c}^\top_K(x,a)\boldsymbol{\psi
}_{K}(\varepsilon).\)
As such, ${\eta}_{t+1,K}(x,a)$ is the projection  of the optimal martingale function ${\eta}^{\star}_{t+1}(x,a)$
on \(\mathrm{span}(\psi_1,\ldots,\psi_K)\).
\begin{assumption}
\label{ass:gamma1} Assume that $|\Sigma_{\mathsf{E},K}^{-1}\boldsymbol{\psi
}_{K}(\varepsilon)|_\infty\leq\Lambda_{\mathsf{E},K}$ for all $\varepsilon
\in\mathsf{E}$, and that  $\mathbb{E}_{\varepsilon\sim\mathcal{P}_{\mathsf{E}}%
}\left[  |\boldsymbol{\psi}_{K}(\varepsilon)|^{2}\right]  \leq\varrho_{\psi
,K}^{2}$.
\end{assumption}
\par
The following theorem provides a bound on the difference between the  projection
${\eta}_{t+1,K}(x,a)$ and its estimate (\ref{etatil}).
\begin{theorem}\label{dualuperr}
Under Assumptions~\ref{ass:S},~\ref{ass:R},~ \ref{ass:gamma}, and \ref{ass:gamma1} it holds
that
\begin{multline*}
\mathbb{E}_{\mathcal{P}_{\mathsf{E}}\otimes\mathsf{P}}\left[  \sup_{(x,a)\in\mathsf{S}\times\mathsf{A}}\left\vert
{\eta}_{t+1,K}(x,a,\cdot)-\widetilde{\eta}_{t+1,K,M}(x,a,\cdot)\right\vert ^{2}\right]
\lesssim\\
\varrho_{\psi,K}^{2}\frac{K(L_{V,K_\text{pr}}L_{\mathcal{K}}I_{\mathcal{D}%
}(\mathsf{S}\times\mathsf{A})+L_{V,K_\text{pr}}L_{\mathcal{K}}\mathsf{D}(\mathsf{S}%
\times\mathsf{A})+V_{\max}^{\star})^2\Lambda_{\mathsf{E},K}^2 }{M}\\
+K\Lambda_{\mathsf{E},K}^{2}\varrho_{\psi,K}^{2}\sup_{(x,a)\in\mathsf{S}\times\mathsf{A}%
}\left\Vert \frac{dP_{t+1}(\cdot|x,a)}{d\mu_{t+1}(\cdot)}\right\Vert
_{\mathsf{\infty}}\Vert V_{t+1}^{\star}-V_{t+1,N}\Vert_{L^{2}(\mu_{t+1}\otimes\mathsf{P})}^{2}\\
+K\varrho_{\psi,K}^{2}L_{V,K_\text{pr}}^{2}L_{\mathcal{K}}^{2}\Lambda_{\mathsf{E},K}^{2}\rho_{L}%
^{2}(\mathsf{S},\mathsf{A}),
\end{multline*}
where
$\lesssim$ denotes $\leq$ up to a natural constant,
the constants $L_{V,K_\text{pr}},$ $L_{\mathcal{K}},$ and the measure $\mu_{t+1}$ are inferred from the primal procedure in Section~\ref{sec:conv-value}.
\end{theorem}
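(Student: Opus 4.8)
The plan is to reduce the claim to a uniform bound on the coefficient vectors and then split the resulting error into three pieces matching the three terms on the right-hand side. Writing $\eta_{t+1,K}(x,a,\varepsilon)-\widetilde{\eta}_{t+1,K,M}(x,a,\varepsilon)=(\mathbf{c}_K(x,a)-\widetilde{\mathbf{c}}_{K,M}(x,a))^\top\boldsymbol{\psi}_K(\varepsilon)$ and applying Cauchy--Schwarz in $\mathbb{R}^K$ followed by integration of $|\boldsymbol{\psi}_K(\varepsilon)|^2$ against $\mathcal{P}_{\mathsf{E}}$ (the fresh $\varepsilon$ is independent of the randomness in $\mathsf{P}$, so the coefficient vectors factor out), Assumption~\ref{ass:gamma1} gives
\[
\mathbb{E}_{\mathcal{P}_{\mathsf{E}}\otimes\mathsf{P}}\Big[\sup_{(x,a)}|\eta_{t+1,K}-\widetilde{\eta}_{t+1,K,M}|^2\Big]\leq \varrho_{\psi,K}^2\,\mathbb{E}_{\mathsf{P}}\Big[\sup_{(x,a)}|\mathbf{c}_K(x,a)-\widetilde{\mathbf{c}}_{K,M}(x,a)|^2\Big].
\]
It then remains to control the coefficient error, for which I would use the decomposition
\[
\mathbf{c}_K-\widetilde{\mathbf{c}}_{K,M}=\underbrace{(\mathbf{c}_K-\bar{\mathbf{c}}_K)}_{\text{(I) value bias}}+\underbrace{(\bar{\mathbf{c}}_K-\mathbf{c}_{K,M})}_{\text{(II) Monte Carlo}}+\underbrace{(\mathbf{c}_{K,M}-I[\mathbf{c}_{K,M}])}_{\text{(III) interpolation}},
\]
together with $(a+b+c)^2\leq 3(a^2+b^2+c^2)$; the three parts will produce, respectively, the second, first, and third terms of the asserted bound.

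For the value-bias term (I), I would use that $\mathbf{c}_K$ and $\bar{\mathbf{c}}_K$ are the $\Sigma_{\mathsf{E},K}^{-1}$-weighted moments of $V_{t+1}^\star(Z^{x,a})$ and $V_{t+1,N}(Z^{x,a})$, so their difference equals $\mathbb{E}[(V_{t+1}^\star-V_{t+1,N})(Z^{x,a})\,\Sigma_{\mathsf{E},K}^{-1}\boldsymbol{\psi}_K(\varepsilon)]$. A componentwise Cauchy--Schwarz, the bound $|\Sigma_{\mathsf{E},K}^{-1}\boldsymbol{\psi}_K|_\infty\leq\Lambda_{\mathsf{E},K}$, and the change of measure from $P_{t+1}(\cdot|x,a)$ to $\mu_{t+1}$ convert the integral into the sup of the Radon--Nikodym derivative times the $L^2(\mu_{t+1})$ norm of $V_{t+1}^\star-V_{t+1,N}$; taking the outer $\mathsf{P}$-expectation produces exactly the second term. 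For the interpolation term (III), I would first verify that each empirical coefficient $c_{k,M}(\cdot,\cdot)$ is Lipschitz: since $V_{t+1,N}\in\mathrm{Lip}(L_{V,K_\text{pr}})$ by the primal analysis of Section~\ref{sec:conv-value} and $\mathcal{K}_{t+1}\in\mathrm{Lip}(L_{\mathcal{K}})$ by Assumption~\ref{ass:S}, the map $(x,a)\mapsto V_{t+1,N}(\mathcal{K}_{t+1}(x,a,\varepsilon_m))(\Sigma_{\mathsf{E},K}^{-1}\boldsymbol{\psi}_K(\varepsilon_m))_k$ has Lipschitz constant $\leq L_{V,K_\text{pr}}L_{\mathcal{K}}\Lambda_{\mathsf{E},K}$, hence so does $c_{k,M}$. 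The interpolation bound \eqref{eq:interp-error} then yields $\|c_{k,M}-I[c_{k,M}]\|_\infty\leq L_{V,K_\text{pr}}L_{\mathcal{K}}\Lambda_{\mathsf{E},K}\rho_L(\mathsf{S},\mathsf{A})$, and summing $K$ components gives the third term.

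The main obstacle is the Monte Carlo term (II), a $K$-dimensional empirical process $\bar{\mathbf{c}}_K(x,a)-\mathbf{c}_{K,M}(x,a)=-\tfrac1M\sum_m(U_m(x,a)-\mathbb{E}U_m(x,a))$ whose supremum over the \emph{continuous} index set $\mathsf{S}\times\mathsf{A}$ must be controlled. Here I would condition on the primal data $\mathcal{D}_{t+1}^N$ (independent of the dual drawings $\widetilde{\varepsilon}_m$) so that $V_{t+1,N}$ is a fixed $L_{V,K_\text{pr}}$-Lipschitz function, and apply the empirical-process bound of Appendix~B componentwise: each summand $V_{t+1,N}(\mathcal{K}_{t+1}(x,a,\varepsilon_m))(\Sigma_{\mathsf{E},K}^{-1}\boldsymbol{\psi}_K(\varepsilon_m))_k$ is bounded by $V_{\max}^\star\Lambda_{\mathsf{E},K}$ and is Lipschitz in $(x,a)$ with constant $L_{V,K_\text{pr}}L_{\mathcal{K}}\Lambda_{\mathsf{E},K}$, so a chaining/entropy-integral argument yields a per-component bound of order $\Lambda_{\mathsf{E},K}^2(L_{V,K_\text{pr}}L_{\mathcal{K}}I_{\mathcal{D}}(\mathsf{S}\times\mathsf{A})+L_{V,K_\text{pr}}L_{\mathcal{K}}\mathsf{D}(\mathsf{S}\times\mathsf{A})+V_{\max}^\star)^2/M$. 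Summing over $k=1,\ldots,K$ and restoring $\varrho_{\psi,K}^2$ gives the first term. The delicate points I expect to dwell on are making the conditioning rigorous so that the Lipschitz and boundedness constants are deterministic and the outer expectation passes through cleanly, and invoking the appendix result with the metric entropy and diameter of the \emph{product} space $\mathsf{S}\times\mathsf{A}$ rather than of $\mathsf{A}$ alone, since here the coefficients genuinely depend on both arguments.
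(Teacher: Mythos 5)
Your proposal is correct and follows essentially the same route as the paper: the identical three-way splitting $\mathbf{c}_K-\widetilde{\mathbf{c}}_{K,M}=(\mathbf{c}_K-\bar{\mathbf{c}}_K)+(\bar{\mathbf{c}}_K-\mathbf{c}_{K,M})+(\mathbf{c}_{K,M}-I[\mathbf{c}_{K,M}])$, with the Monte Carlo piece handled by conditioning on $\mathcal{D}_{t+1}^N$ and applying Proposition~\ref{prop:unif-exp} componentwise over $\mathsf{S}\times\mathsf{A}$, the bias piece by Cauchy--Schwarz plus the change of measure to $\mu_{t+1}$, and the interpolation piece via the Lipschitz bound on $c_{k,M}$ and \eqref{eq:interp-error}. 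The only cosmetic differences (factoring out $\varrho_{\psi,K}^2$ once at the start rather than per term, and using $(a+b+c)^2\leq 3(a^2+b^2+c^2)$ in place of the $L^2$ triangle inequality) are immaterial under $\lesssim$.
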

Let us now consider the approximation error
\begin{align*}
\mathcal{E}_{K,t}^{2}  & :=\mathbb{E}_{\varepsilon\sim\mathcal{L}%
_{\mathsf{E}}}\left[  \sup_{(x,a)\in\mathsf{S}\times\mathsf{A}}\left\vert
{\eta}_{t+1,K}(x,a)-\eta_{t+1}^{\star}(x,a)\right\vert^{2} \right]  \end{align*}
 with
 \begin{align*}
\eta_{t+1}^{\star}(x,a)  & = V_{t+1}^{\star}(\mathcal{K}%
_{t+1}(x,a,\varepsilon))-\mathbb{E}\left[  V_{t+1}^{\star}(\mathcal{K}%
_{t+1}(x,a,\varepsilon))\right]  ,\quad(x,a)\in\mathsf{S}\times\mathsf{A}%
,\quad t\in\lbrack H[.
\end{align*}
Suppose that one has pointwise
\[
\eta_{t+1}^{\star}(x,a)=\sum_{k=1}^{\infty}c_{k,t+1}^{\star}(x,a)\psi
_{k}(\varepsilon_{t+1}),\quad(x,a)\in\mathsf{S}\times\mathsf{A},\quad
t\in\lbrack H[.
\]
 If
$\Vert\psi_{k}\Vert_{\infty}\leq\psi_{k}^{\ast}$ for all $k\in\mathbb{N},$
then
\begin{multline*}
\mathcal{E}_{K,t}^{2}=\mathbb{E}\left[  \sup_{(x,a)\in\mathsf{S}%
\times\mathsf{A}}\left\vert \sum_{k=K+1}^{\infty}c_{k,t+1}^{\star}%
(x,a)\psi_{k}(\varepsilon_{t})\right\vert ^{2}\right]
\leq\sup_{(x,a)\in\mathsf{S}\times\mathsf{A}}\left(  \sum_{k=K+1}^{\infty
}|c_{k,t+1}^{\star}(x,a)|\psi_{k}^{\ast}\right)^2  .
\end{multline*}
If
\begin{equation}\label{eq:bias-eta}
\sup_{(x,a)\in\Xset\times\Aset}\sum_{k=1}^{\infty}k^{\beta_{\psi}}|c_{k,t+1}^{\star
}(x,a)|\psi_{k}^{\ast}\leq C<\infty
\end{equation}
for some $\beta_{\psi}>0,$ then
\begin{equation}
\mathcal{E}_{K,t}^{2}\leq C^{2}K^{-2\beta_{\psi}}.\label{appr_fehler}%
\end{equation}
\paragraph{Discussion}
\begin{itemize}
\item Let us discuss the quantity \(\rho_{L}(\mathsf{S},\mathsf{A}).\) Let $\Xset = [0,1]^\dx,$ $\Aset = [0,1]^\da$  for some  \(\dx,\da\in \mathbb{N}\) and let the points \(\Xset_L\) (\(\Aset_L\)) be uniformly distributed on \(\Xset\) (\(\Aset\)).
Moreover set, \(\rho((x,a),(x',a'))=|x-x'|+|a-a'|.\) Then, similarly to \cite{Reznikov} it can be shown that
\begin{equation}
\label{cov rad lp}
[\mathbb{E}{\rho^p_L(\Xset\times \Aset)}]^{1/p} \lesssim \sqrt{\dx} \left( \frac{p \log L}{L} \right)^{1/\dx}+\sqrt{\da} \left( \frac{p \log L}{L} \right)^{1/\da},
\end{equation}
where \(\lesssim\) stands for inequality up to a constant not depending on \(L.\) Using the Markov inequality, we can derive a high probability bound  for \(\rho_{L}(\mathsf{S},\mathsf{A}).\)   Note that if \(\Xset\) and \(\Aset\) are finite we need not to interpolate and \(\rho_L=0\).

\item Assume that  all basis functions \((\psi_k)\)  are uniformly bounded and that the
infinity matrix norm (i.e. the maximum absolute row sum) of
the matrix \(\Sigma_{\mathsf{E},K}\) is uniformly bounded from below for all \(K\in \mathbb{N}.\) In this case, \(\varrho_{\psi,K} \lesssim K^{1/2},\) \(\Lambda_{\mathsf{E},K}\lesssim 1,\) \(L_{V,K_\text{pr}}\lesssim L_{\gamma,K_\text{pr}}H K_\text{pr}^{1/2}.\) Suppose also that   the quantities \(\sup_{(x,a)\in\mathsf{S}\times\mathsf{A}%
}\left\Vert \frac{dP_{t+1}(\cdot|x,a)}{d\mu_{t+1}(\cdot)}\right\Vert
_{\mathsf{\infty}}\) are uniformly bounded for all \(t>0.\) Then using the bound \eqref{eq:zerobias1} and the bound of Theorem~\ref{dualuperr}, we arrive at
\begin{multline}
\mathbb{E}_{\mathcal{P}_{\mathsf{E}}\otimes\mathsf{P}}
\left[  \sup_{(x,a)\in\mathsf{S}\times\mathsf{A}}\left\vert {\eta
}_{t+1}^{\star}(x,a,\cdot)-\widetilde{\eta}_{t+1,K,M}(x,a,\cdot)\right\vert ^{2}\right]
^{1/2}\lesssim D_{t+1}(H,K,K_{\text{pr}},L)+\label{eq:zerobias-up}\\
\frac{HKK_{\text{pr}}^{1/2}L_{\gamma,K_{\text{pr}}}(\sqrt{\da}+\sqrt{\dx}%
)}{\sqrt{M}}+\frac{(H-t-1)HKK_{\text{pr}}^{3/2}\mathfrak{R}^{\max}%
L_{\gamma,K_{\text{pr}}}\sqrt{\da}}{\sqrt{N}}
\end{multline}
where $D_{t+1}(H,K,K_{\text{pr}},L)$ denotes the deterministic part of the
error reflecting the approximation properties of the systems
$\boldsymbol{\gamma}_{K_{\text{pr}}},$ $\boldsymbol{\psi}_{K}$ and the interpolation error
due to finite $L$ (see the above discussions for some quantitative estimates).
Under the above assumptions, including (\ref{eq:bias-eta}), one obtains from (\ref{appr_fehler}),
Theorem~\ref{thm:prim-bound}, and
Theorem~\ref{dualuperr},%
\[
D_{t+1}(H,K,K_{\text{pr}},L)\lesssim K^{-\beta_{\psi}}+HKK^{1/2}_{\rm{pr}}L_{\gamma
,K_{\text{pr}}}\rho_{L}(\mathsf{S},\mathsf{A})+K\mathfrak{R}^{\max}%
\sum_{l=t+1}^{H-1}\mathcal{R}_{K_{\text{pr}},l},
\]
where $\lesssim$ means inequality up to a constant not depending on $H,$ $N,$
$K,$ $K_{\text{pr}},$ and $L.$ This bound is again polynomial in $H,$ provided
that $\mathfrak{R}^{\max}$ depends polynomially on $H$ (see the discussion
after Theorem~\ref{thm:prim-bound}).

\end{itemize}

\subsection{Convergence of upper bounds}
Suppose that the estimates $\boldsymbol{\widetilde{\eta}}=(\widetilde{\eta
}_{t+1}(x,a),\,t\in\lbrack H[)$ of the optimal martingale tuple $\eta^{\star
}=(\eta_{t}^{\star}(x,a),\,t\in]H])$ are constructed based on the sampled data
$\mathcal{D}_{1}^{N}\vee \mathcal{G}_{1}^{M}\vee\ldots\vee \mathcal{G}_{H}^{M}$ such that Theorem~\ref{dualuperr} holds. Consider for
$\widetilde{\boldsymbol{\xi}}:=(\widetilde{\eta}_{t+1}(S_{t}(a_{<t}%
),a_{t}),\, a_{<t}\in \mathsf{A}^t,\, t\in\lbrack H[)\in\Xi$, $S_0=x$, the upper bias
\begin{align*}
V_{0}^{\mathrm{up}}(x;\widetilde{\boldsymbol{\xi}})-V_{0}^{\star}(x) &
=\mathbb{E}_{x}\left[  \sup_{a_{\geq0}\in\mathsf{A}^{H}}\left(  \sum
_{t=0}^{H-1}\left(  R_{t}(S_{t}(a_{<t}),a_{t})-\widetilde{\eta}_{t+1}%
(S_{t}(a_{<t}),a_{t})\right)  +F(S_{H})\right)  \right]  \\
&  -\mathbb{E}_{x}\left[  \sup_{a_{\geq0}\in\mathsf{A}^{H}}\left(  \sum
_{t=0}^{H-1}\left(  R_{t}(S_{t}(a_{<t}),a_{t})-\eta_{t+1}^{\star}(S_{t}%
(a_{<t}),a_{t})\right)  +F(S_{H})\right)  \right]  \\
&  \leq\mathbb{E}_{x}\left[  \sup_{a_{\geq0}\in\mathsf{A}^{H}}\left\vert
\sum_{t=0}^{H-1}\eta_{t+1}^{\star}(S_{t}(a_{<t}),a_{t})-\sum_{t=0}%
^{H-1}\widetilde{\eta}_{t+1}(S_{t}(a_{<t}),a_{t})\right\vert \right]  \\
&  \leq\sum_{t=0}^{H-1}\mathbb{E}_{x}\left[  \sup_{(x,a)\in\mathsf{S}%
\times\mathsf{A}}\left\vert \eta_{t+1}^{\star}(x,a)-\widetilde{\eta}%
_{t+1}(x,a)\right\vert \right]  \\
&  \leq\sum_{t=0}^{H-1}\mathbb{E}_{x}\left[  \sup_{(x,a)\in\mathsf{S}%
\times\mathsf{A}}\left\vert \eta_{t+1}^{\star}(x,a)-\widetilde{\eta}%
_{t+1}(x,a)\right\vert ^{2}\right]  ^{1/2},
\end{align*}
where $\mathbb{E}_{x}$ denotes the ``all-in'' expectation, i.e. including
the randomness of the pre-simulation, and, the independently simulated trajectories
$t\rightarrow S_t(a_{<t})$. Furthermore, similarly,
\begin{multline*}
\mathrm{Var}\left[  \sup_{a_{\geq0}\in\mathsf{A}^{H}}\left(  \sum_{t=0}%
^{H-1}\left(  R_{t}(S_{t}(a_{<t}),a_{t})-\widetilde{\eta}_{t+1}(S_{t}%
(a_{<t}),a_{t})\right)  +F(S_{H})\right)  \right]  \\
=\mathrm{Var}\left[
\begin{array}
[c]{c}%
\sup_{a_{\geq0}\in\mathsf{A}^{H}}\left(  \sum_{t=0}^{H-1}\left(  R_{t}%
(S_{t}(a_{<t}),a_{t})-\widetilde{\eta}_{t+1}(S_{t}(a_{<t}),a_{t})\right)
+F(S_{H})\right)  \\
-\sup_{a_{\geq0}\in\mathsf{A}^{H}}\left(  \sum_{t=0}^{H-1}\left(  R_{t}%
(S_{t}(a_{<t}),a_{t})-\eta_{t+1}^{\star}(S_{t}(a_{<t}),a_{t})\right)
+F(S_{H})\right)
\end{array}
\right]  \\
\leq\mathbb{E}_x\left[  \left(  \sum_{t=0}^{H-1}\sup_{(x,a)\in\mathsf{S}%
\times\mathsf{A}}\left\vert \eta_{t+1}^{\star}(x,a)-\widetilde{\eta}%
_{t+1}(x,a)\right\vert \right)  ^{2}\right]  .
\end{multline*}
Hence for the standard deviation we get by the triangle inequality,%
\begin{multline*}
\mathrm{Dev}\left[  \sup_{a_{\geq0}\in\mathsf{A}^{H}}\left(  \sum_{t=0}%
^{H-1}\left(  R_{t}(S_{t}(a_{<t}),a_{t})-\widetilde{\eta}_{t+1}(S_{t}%
(a_{<t}),a_{t})\right)  +F(S_{H})\right)  \right]  \\
\leq\sum_{t=0}^{H-1}\mathbb{E}_x\left[  \sup_{(x,a)\in\mathsf{S}\times
\mathsf{A}}\left\vert \eta_{t+1}^{\star}(x,a)-\widetilde{\eta}_{t+1}%
(x,a)\right\vert ^{2}\right]  ^{1/2}.
\end{multline*}
Thus, for the Monte Carlo estimate of $V_{0}^{\mathrm{up}}(x;\widetilde
{\boldsymbol{\xi}})$,%

\[
V_{0,N_{\mathrm{{test}}}}^{\mathrm{up}}(x;\widetilde{\boldsymbol{\xi}}%
)=\frac{1}{N_{\mathrm{{test}}}}\sum_{n=1}^{N_{\mathrm{{test}}}}\sup_{a_{\geq
0}\in\mathsf{A}^{H}}\left(  \sum_{t=0}^{H-1}\left(  R_{t}(S_{t}^{(n)}%
(a_{<t}),a_{t})-\widetilde{\eta}_{t+1}(S_{t}^{(n)}(a_{<t}),a_{t})\right)
+F(S_{H}^{(n)})\right)
\]
with
\[
S_{t}^{(n)}(a_{<t})=\mathcal{K}_{t}(S_{t-1}^{(n)}(a_{<t-1}),a_{t-1}%
,\varepsilon_{t}^{(n)}),\quad t\in]H],\quad S_{0}^{(n)}=x,
\]
we obtain
\begin{align*}
\mathbb{E}_x\left[  |V_{0,N_{\mathrm{{test}}}}^{\mathrm{up}}(x;\widetilde
{\boldsymbol{\xi}})-V_{0}^{\star}(x)|^{2}\right]  ^{1/2} &  \leq
\mathbb{E}_x\left[  |V_{0,N_{\mathrm{{test}}}}^{\mathrm{up}}(x;\widetilde
{\boldsymbol{\xi}})-V_{0}^{\mathrm{up}}(x;\widetilde{\boldsymbol{\xi}}%
)|^{2}\right]  ^{1/2}+V_{0}^{\mathrm{up}}(x;\widetilde{\boldsymbol{\xi}%
})-V_{0}^{\star}(x)\\
&  \leq\frac{1}{\sqrt{N_{\mathrm{{test}}}}}\sum_{t=0}^{H-1}\mathbb{E}_x\left[
\sup_{(x,a)\in\mathsf{S}\times\mathsf{A}}\left\vert \eta_{t+1}^{\star
}(x,a)-\widetilde{\eta}_{t+1}(x,a)\right\vert ^{2}\right]  ^{1/2}\\
&  +\sum_{t=0}^{H-1}\mathbb{E}_{x}\left[  \sup_{(x,a)\in\mathsf{S}%
\times\mathsf{A}}\left\vert \eta_{t+1}^{\star}(x,a)-\widetilde{\eta}%
_{t+1}(x,a)\right\vert \right]  \\
&  \leq \left(\frac{1}{\sqrt{N_{\mathrm{{test}}}}}+1\right)\sum_{t=0}^{H-1}\mathbb{E}%
_{x}\left[  \sup_{(x,a)\in\mathsf{S}\times\mathsf{A}}\left\vert \eta
_{t+1}^{\star}(x,a)-\widetilde{\eta}_{t+1}(x,a)\right\vert ^{2}\right]
^{1/2}.
\end{align*}
By using the bound of Theorem~\ref{dualuperr} (or in more specific form \eqref{eq:zerobias-up}), we derive the corresponding bound \(\mathbb{E}_x\left[  |V_{0,N_{\mathrm{{test}}}}^{\mathrm{up}}(x;\widetilde
{\boldsymbol{\xi}})-V_{0}^{\star}(x)|^{2}\right]  ^{1/2}.\) Note that this bound remains polynomial in \(H\) under rather general assumptions.  Let us also remark that we can use the same interpolation points to construct \(\widetilde{\eta}_{t+1}(S_{t}^{(n)}(a_{<t}),a_{t})\) for all \(n=1,\ldots,N\) and all \(a_{\leq t}.\)

%HIEE

\ifdefined\usejmlrstyle

\acks{
  We thank
  }

\else

\subsection*{Acknowledgments}
J.S. gratefully acknowledges financial support from the German science foundation (DFG) via the
cluster of excellence MATH+, project AA4-2.
\fi

\section{Proofs}

\subsection{Proof of Theorem~\ref{thm:prim-bound}}

\textit{One-step analysis:} Suppose that after $h$ steps of the algorithm the
estimates $V_{H,N},\ldots,V_{h+1,N}$ of the value functions $V_{H}^{\star
},\ldots,V_{h+1}^{\star}$, respectively, are constructed using sampled data
$\mathcal{D}_{h+1}^{N},$ such that $\Vert V_{t,N}\Vert_{\infty}\leq 
\widetilde{L}_t
\leq
V_{\max
}^{\star}$ a.s. for all $t=h+1,\ldots,H.$ Denote for $a\in\mathsf{A}$,
\begin{align*}
\ell^{a}(\beta)  & :=\mathbb{E}\left[  (Z^{a}-\beta^{\top}\boldsymbol{\gamma
}_{K}(X))^{2}\mid{\mathcal{D}_{h+1}^{N}}\right]  \text{ \ \ with \ }\\
\text{\ }Z^{a}  & \sim V_{h+1,N}(Y^{a,X}),\quad Y^{a,X}\sim P_{h+1}%
(\cdot|X,a),\text{ \ \ }X\sim\mu_{h}.
\end{align*}
The unique minimizer of $\ell^{a}(\beta)$ is given by the ${\mathcal{D}%
_{h+1}^{N}}$-measurable vector
\[
\beta_{a}=\mathbb{E}\left[  Z^{a}\Sigma^{-1}\boldsymbol{\gamma}_{K}%
(X)\mid{\mathcal{D}_{h+1}^{N}}\right]  =\mathbb{E}_{X\sim\mu_{h}}\left[
P_{h+1}^{a}V_{h+1,N}(X)\Sigma^{-1}\boldsymbol{\gamma}_{K}(X)\mid
{\mathcal{D}_{h+1}^{N}}\right]  .
\]
For the estimation of the ${\mathcal{D}_{h}^{N}}$-measurable vector
$\beta_{N,a}$ in (\ref{7++}), see (\ref{lowconst}), (\ref{7+}), and
Assumption~\ref{ass:S}, it then holds that
\begin{multline*}
\mathbb{E}_{\mu_{h}\otimes\mathsf{P}}\left[  \sup_{a\in\mathsf{A}}\left(
(\beta_{N,a}^{\top}-\beta_{a}^{\top})\boldsymbol{\gamma}_{K}(X)\right)
^{2}\mid{\mathcal{D}_{h+1}^{N}}\right]  \\
\leq\mathbb{E}_{\mathsf{P}}\left[  \sup_{a\in\mathsf{A}}\left\vert \beta
_{N,a}-\beta_{a}\right\vert ^{2}\mid{\mathcal{D}_{h+1}^{N}}\right]
\mathbb{E}_{X\sim\mu_{h}}\left[  |\boldsymbol{\gamma}_{K}(X)|^{2}\right]  \\
\leq\sum_{k=1}^{K}\mathbb{E}_{\mathsf{P}}\left[  \sup_{a\in\mathsf{A}}\left(
\beta_{N,a,k}-\beta_{a,k}\right)  ^{2}\mid{\mathcal{D}_{h+1}^{N}}\right]
\mathbb{E}_{X\sim\mu_{h}}\left[  |\boldsymbol{\gamma}_{K}(X)|^{2}\right]  ,
\end{multline*}
where according to Proposition~\ref{prop:unif-exp}, (component wise applied to
the vector function $f(x,a,\varepsilon)=V_{h+1,N}(\mathcal{K}_{h+1}%
(x,a,\varepsilon))\Sigma^{-1}\boldsymbol{\gamma}_{K}(x)$ with $p=2$, see
\eqref{eq:flip}) one has for $k$ $=$ $1,\ldots,K,$
\begin{equation}
\mathbb{E}_{\mathsf{P}}\left[  \sup_{a\in\mathsf{A}}\left(  \beta
_{N,a,k}-\beta_{a,k}\right)  ^{2}\mid{\mathcal{D}_{h+1}^{N}}\right]
\lesssim\frac{(L_{V,K}L_{\mathcal{K}}I_{\mathcal{D}}(\mathsf{A})+L_{V,K}%
L_{\mathcal{K}}\mathsf{D}(\mathsf{A})+V_{\max}^{\star})^{2}\Lambda_{K}^{2}}%
{N}.\label{rhoK}%
\end{equation}
Due to the very structure of $V_{h+1,N}$ (see (\ref{lowconst})), we further
have
\begin{equation}
\mathbb{E}_{X\sim\mu_{h}}\left[  \sup_{a\in\mathsf{A}}(\beta_{a}^{\top
}\boldsymbol{\gamma}(X)-P_{h+1}^{a}V_{h+1,N}(X))^{2}\mid{\mathcal{D}_{h+1}%
^{N}}\right]  \leq\mathcal{R}_{K,h}^{2},\label{rkh}%
\end{equation}
and then with (\ref{rhoK}) and (\ref{rkh}) we have the estimate
\begin{multline}
\mathbb{E}_{\mu_{h}\otimes\mathsf{P}}\left[  \sup_{a\in\mathsf{A}%
}(\widetilde{P}_{h+1,N}^{a}V_{h+1,N}(X)-P_{h+1}^{a}V_{h+1,N}(X))^{2}%
\mid{\mathcal{D}_{h+1}^{N}}\right]  ^{1/2}\leq\label{one1}\\
\mathbb{E}_{\mu_{h}\otimes\mathsf{P}}\left[  \sup_{a\in\mathsf{A}}(\beta
_{N,a}^{\top}\boldsymbol{\gamma}_{K}(X)-P_{h+1}^{a}V_{h+1,N}(X))^{2}%
\mid{\mathcal{D}_{h+1}^{N}}\right]  ^{1/2}\leq\\
\mathbb{E}_{\mu_{h}\otimes\mathsf{P}}\left[  \sup_{a\in\mathsf{A}}(\beta
_{N,a}^{\top}\boldsymbol{\gamma}_{K}(X)-\beta_{a}^{\top}\boldsymbol{\gamma
}_{K}(X))^{2}\mid{\mathcal{D}_{h+1}^{N}}\right]  ^{1/2}\\
+\mathbb{E}_{X\sim\mu_{h}}\left[  \sup_{a\in\mathsf{A}}(\beta_{a}^{\top
}\boldsymbol{\gamma}_{K}(X)-P_{h+1}^{a}V_{h+1,N}(X))^{2}\mid{\mathcal{D}%
_{h+1}^{N}}\right]  ^{1/2}\\
\leq\varrho_{\gamma,K}\Lambda_{K}(L_{V,K}L_{\mathcal{K}}I_{\mathcal{D}%
}(\mathsf{A})+L_{V,K}L_{\mathcal{K}}\mathsf{D}(\mathsf{A})+V_{\max}^{\star
})\sqrt{\frac{K}{N}}+\mathcal{R}_{K,h}.
\end{multline}
Since the right-hand-side of (\ref{one1}) is deterministic, the conditioning
on ${\mathcal{D}_{h+1}^{N}}$ may be dropped and we obtain%
\begin{multline}
\mathbb{E}_{\mu_{h}\otimes\mathsf{P}}\left[  \sup_{a\in\mathsf{A}%
}(\widetilde{P}_{h+1,N}^{a}V_{h+1,N}(X)-P_{h+1}^{a}V_{h+1,N}(X))^{2}\right]
^{1/2}\leq\label{one}\\
\leq\varrho_{\gamma,K}\Lambda_{K}(L_{V,K}L_{\mathcal{K}}I_{\mathcal{D}%
}(\mathsf{A})+L_{V,K}L_{\mathcal{K}}\mathsf{D}(\mathsf{A})+V_{\max}^{\star
})\sqrt{\frac{K}{N}}+\mathcal{R}_{K,h}.
\end{multline}
\par
\textit{Multi step analysis:} Let us denote for $h\in[H[,$
\begin{equation}
\Delta_{h,N}^{a}(x):=\widetilde{P}_{h+1,N}^{a}V_{h+1,N}(x)-P_{h+1}%
^{a}V_{h+1,N}(x)\text{ \ \ and \ \ }\Delta_{h}(x):=\sup_{a\in\mathsf{A}%
}|\Delta_{h,N}^{a}(x)|.\label{defeps}%
\end{equation}
Note that
$$P_{h+1}^{\pi_{h}}P_{h^{\prime}+1}%
^{\pi_{h^{\prime}}}(dx^{\prime\prime}|x) = \int_{\mathsf{S}}P_{h+1}^{\pi_{h}%
}(dx^{\prime}|x)P_{h^{\prime}+1}^{\pi_{h^{\prime}}}(dx^{\prime\prime
}|x^{\prime}).$$ We then have%
\begin{align}
V_{h}^{\star}(x)-V_{h,N}(x) &  =\sup_{a\in\mathsf{A}}\left\{  R_{h}%
(x,a)+P_{h+1}^{a}V_{h+1}^{\star}(x)\right\}  -\sup_{a\in\mathsf{A}}\left\{
R_{h}(x,a)+\widetilde{P}_{h+1,N}^{a}V_{h+1,N}(x)\right\}  \nonumber\\
&  =R_{h}(x,\pi_{h}^{\star}(x))+\int V_{h+1}^{\star}(x^{\prime})P_{h+1}%
(dx^{\prime}|x,\pi_{h}^{\star}(x))\nonumber\\
&  -\sup_{a\in\mathsf{A}}\left\{  R_{h}(x,a)+\widetilde{P}_{h+1,N}%
^{a}V_{h+1,N}(x)\right\}  \nonumber\\
&  \leq\int\left(  V_{h+1}^{\star}-V_{h+1,N}\right)  (x^{\prime}%
)P_{h+1}(dx^{\prime}|x,\pi_{h}^{\star}(x))\nonumber\\
&  +\sup_{a\in\mathsf{A}}\left\{  R_{h}(x,a)+P_{h+1}^{a}V_{h+1,N}(x)\right\}
-\sup_{a\in\mathsf{A}}\left\{  R_{h}(x,a)+\widetilde{P}_{h+1,N}^{a}%
V_{h+1,N}(x)\right\}  \nonumber\\
&  \leq P_{h+1}^{\pi_{h}^{\star}}\left(  V_{h+1}^{\star}-V_{h+1,N}\right)
(x)+\text{\ }\Delta_{h}(x)\label{vup4}%
\end{align}
and analogously,
\begin{equation}
V_{h}^{\star}(x)-V_{h,N}(x)\geq P_{h+1}^{\pi_{h,N}}[V_{h+1}^{\star}%
-V_{h+1,N}](x)-\Delta_{h}(x).\label{vlow4}%
\end{equation}
By iterating (\ref{vup4}) and (\ref{vlow4}) upwards, and using that
$V_{H,N}=V_{H}^{\star},$ we obtain, respectively,%
\begin{align*}
V_{h}^{\star}(x)-V_{h,N}(x) &  \leq\sum_{k=1}^{H-h-1}P_{h+1}^{\pi_{h}^{\star}%
}\ldots P_{h+k}^{\pi_{h+k-1}^{\star}}[\Delta_{h+k}](x)+\Delta
_{h}(x),\text{\ \ and}\\
V_{h}^{\star}(x)-V_{h,N}(x) &  \geq-\sum_{k=1}^{H-h-1}P_{h+1}^{\pi_{h,N}%
}\ldots P_{h+k}^{\pi_{h+k-1,N}}[\Delta_{h+k}](x)-\Delta_{h}(x).
\end{align*}
We thus have pointwise,%
\begin{align*}
\left\vert V_{h}^{\star}(x)-V_{h,N}(x)\right\vert  &  \leq\sum_{k=1}%
^{H-h-1}P_{h+1}^{\pi_{h}^{\star}}\ldots P_{h+k}^{\pi_{h+k-1}^{\star}%
}[\Delta_{h+k}](x)\\
&  +\sum_{k=1}^{H-h-1}P_{h+1}^{\pi_{h,N}}\ldots P_{h+k}^{\pi_{h+k-1,N}%
}[\Delta_{h+k}](x)+\Delta_{h}(x)
\end{align*}
which implies
\begin{align*}
\left\Vert V_{h}^{\star}-V_{h,N}\right\Vert _{L^{2}(\mu_{h}\otimes\mathsf{P})} &  \leq
2\sup_{\boldsymbol{\pi}}\sum_{k=1}^{H-h-1}\left\Vert P_{h+1}^{\pi_{h}}\ldots
P_{h+k}^{\pi_{h+k-1}}[\Delta_{h+k}]\right\Vert _{L^{2}(\mu_{h}\otimes\mathsf{P})}+\left\Vert
\Delta_{h}\right\Vert _{L^{2}(\mu_h\otimes\mathsf{P})}.
\end{align*}
Hence we have due to Assumption~\ref{ass:mu},
\[
\left\Vert V_{h}^{\star}-V_{h,N}\right\Vert _{L^{2}(\mu_{h}\otimes\mathsf{P})}\leq2\mathfrak{R}%
^{\max}\sum_{l=h}^{H-1}\left\Vert \Delta_{l}\right\Vert _{L^{2}(\mu_{l}\otimes\mathsf{P})}%
\]
(note that $\mathfrak{R}^{\max}\geq1$), and then, by the definitions
(\ref{defeps}) and the estimate (\ref{one}), the
statement of the theorem follows.

\subsection{Proof of Theorem~\ref{dualuperr}}

For the unique minimizer of (\ref{cK}) one has that
\begin{equation}
{\mathbf{c}}_{K}(x,a):=
\Sigma_{\mathsf{E},K} ^{-1}\,\mathbb{E}\left[  V_{t+1}^{\star}(\mathcal{K}%
_{t+1}(x,a,\varepsilon))\boldsymbol{\psi}_{K}(\varepsilon)\right]  .
\end{equation}
Likewise, the unique minimizer of the problem
$$
\inf_{\mathbf{c}\in\mathbb{R}^K}\mathbb{E}_{\varepsilon\sim\mathcal{P}_{\mathsf{E}}%
}\left[  \left(V_{t+1,N}(\mathcal{K}_{t+1}(x,a,\varepsilon))-\mathbf{c}^{\top}\boldsymbol{\psi}%
_{K}(\varepsilon)\right)^2|\mathcal{D}^{N}_{t+1}\right]
$$
is given by $$\bar{\mathbf{c}}_{K}(x,a):=
\Sigma_{\mathsf{E},K}^{-1}\,\mathbb{E}_{\varepsilon\sim\mathcal{P}_{\mathsf{E}}}\left[ V_{t+1,N}(\mathcal{K}_{t+1}(x,a,\varepsilon))\boldsymbol{\psi}_{K}(\varepsilon)|\mathcal{D}^{N}_{t+1}\right].$$
Now let ${\mathbf{c}}_{K,M}(x,a)$ be the Monte Carlo estimate of  $\bar{\mathbf{c}}_{K}(x,a)$ as constructed
in Section~\ref{dualalg}, see (\ref{cbalke}) and
(\ref{cKN}).
We then have
\begin{multline}\label{ga1}
\mathbb{E}_{\mathcal{P}_{\mathsf{E}}\otimes \mathsf{P}}
\left[  \left\vert \sup_{(x,a)\in\mathsf{S}\times\mathsf{A}%
}({\mathbf{c}}_{K,M}-\bar{\mathbf{c}}_{K})^{\top}(x,a)\boldsymbol{\psi}_{K}%
(\varepsilon)\right\vert ^{2}|\mathcal{D}^{N}_{t+1}\right]  \\
\leq\mathbb{E}_{\mathsf{P}}\left[  \sup_{(x,a)\in\mathsf{S}\times\mathsf{A}}\left\vert
({\mathbf{c}}_{K,M}-\bar{\mathbf{c}}_{K})^{\top}(x,a)\right\vert ^{2}|\mathcal{D}%
^{N}_{t+1}\right]  \mathbb{E}_{\varepsilon\sim\mathcal{P}_{\mathsf{E}}}\left[
|\boldsymbol{\psi}_{K}(\varepsilon)|^{2}\right]  ,
\end{multline}
where according to Proposition~ \ref{prop:unif-exp} (applied componentwise
with $p=2$ to the vector function $f(x,a,\varepsilon)=
V_{t+1,N}(\mathcal{K}_{t+1}(x,a,\varepsilon))
\Sigma_{\mathsf{E},K}^{-1}\boldsymbol{\psi}_{K}(\varepsilon)$, see
\eqref{eq:flip})
\begin{multline}
\mathbb{E}_{\mathsf{P}}\left[ \sup_{(x,a)\in \mathsf{S}\times \mathsf{A}%
}\left\vert ({\mathbf{c}}_{K,M}-\bar{\mathbf{c}}_{K})(x,a)\right\vert ^{2}|%
\mathcal{D}_{t+1}^{N}\right]   \label{ga2} \\
\leq \frac{K(L_{V,K_{\text{pr}}}L_{\mathcal{K}}I_{\mathcal{D}}(\mathsf{S}%
\times \mathsf{A})+L_{V,K_{\text{pr}}}L_{\mathcal{K}}\mathsf{D}(\mathsf{S}%
\times \mathsf{A})+V_{\max }^{\star })^{2}\Lambda _{\mathsf{E},K}^{2}}{M}.
\end{multline}
Since for any pair $(x,a)\in \Xset\times \Aset,$%
\begin{multline*}
\left\vert ({\mathbf{c}}_{K}-\bar{\mathbf{c}}_{K})(x,a)\right\vert
^{2}=\left\vert \mathbb{E}_{\varepsilon\sim\mathcal{P}_{\mathsf{E}}}\left[
\left(  V_{t+1}^{\star}(\mathcal{K}_{t+1}(x,a,\varepsilon))-V_{t+1,N}%
(\mathcal{K}_{t+1}(x,a,\varepsilon)\right)  \Sigma_{\mathsf{E},K}%
^{-1}\boldsymbol{\psi}_{K}(\varepsilon)|\mathcal{D}_{t+1}^{N}\right]
\right\vert ^{2}\\
\leq\int\left\vert V_{t+1}^{\star}(\mathcal{K}_{t+1}(x,a,\varepsilon
))-V_{t+1,N}(\mathcal{K}_{t+1}(x,a,\varepsilon))\right\vert ^{2}%
d\mathcal{P}_{\mathsf{E}}(\varepsilon)\,\int\left\vert \Sigma_{\mathsf{E}%
,K}^{-1}\boldsymbol{\psi}_{K}(\varepsilon)\right\vert ^{2}d\mathcal{P}%
_{\mathsf{E}}(\varepsilon)\\
\leq K\Lambda_{\mathsf{E},K}^{2}\sup_{(x,a)\in\mathsf{S}\times\mathsf{A}%
}\left\Vert \frac{dP_{t+1}(\cdot|x,a)}{d\mu_{t+1}(\cdot)}\right\Vert _{\infty
}\int\left\vert V_{t+1}^{\star}(y)-V_{t+1,N}(y)\right\vert ^{2}\mu_{t+1}(dy),
\end{multline*}
we have
\begin{multline}
\mathbb{E}_{\varepsilon\sim\mathcal{P}_{\mathsf{E}}}\left[  \left\vert
\max_{(x,a)\in\mathsf{S}_{L}\times\mathsf{A}_{L}}({\mathbf{c}}_{K}%
-\bar{\mathbf{c}}_{K})^{\top}(x,a)\boldsymbol{\psi}_{K}(\varepsilon
)\right\vert ^{2}|\mathcal{D}_{t+1}^{N}\right]  \label{ga3}\\
\leq\max_{(x,a)\in\mathsf{S}_{L}\times\mathsf{A}_{L}}\left\vert ({\mathbf{c}%
}_{K}-\bar{\mathbf{c}}_{K})(x,a)\right\vert ^{2}\mathbb{E}_{\varepsilon
\sim\mathcal{P}_{\mathsf{E}}}\left[  |\boldsymbol{\psi}_{K}(\varepsilon
)|^{2}\right]  \\
\leq K\varrho_{\psi,K}^{2}\Lambda_{\mathsf{E},K}^{2}\sup_{(x,a)\in
\mathsf{S}\times\mathsf{A}}\left\Vert \frac{dP_{t+1}(\cdot|x,a)}{d\mu
_{t+1}(\cdot)}\right\Vert _{\infty}\Vert V_{t+1}^{\star}-V_{t+1,N}\Vert
_{L^{2}(\mu_{t+1})}^{2}.
\end{multline}
Next due to \eqref{eq:flip}, we derive for any \(k\in [K],\)
\begin{multline*}
  |c_{k,M}(x,a)-c_{k,M}(x^{\prime},a^{\prime})|\\
  \leq  \frac{1}{M}\sum_{m=1}^{M}|V_{t+1,N}(\mathcal{K}_{t+1}%
(x,a,\widetilde\varepsilon_{m}))-V_{t+1,N}(\mathcal{K}_{t+1}(x^{\prime},a^{\prime
},\widetilde\varepsilon_{m}))||\Sigma_{\mathsf{E},K}  ^{-1}\boldsymbol{\psi}_{K}(\widetilde\varepsilon
_{m})|_{\infty}\\
  \leq L_{V,K_\text{pr}}L_{\mathcal{K}}\Lambda_{\mathsf{E},K}\rho((x,a),(x^{\prime
},a^{\prime}))
\end{multline*}
and so
with $I\left[  \mathbf{c}_{K,M}\right]  :=(I\left[  c_{1,M}\right]
,\ldots,I\left[  c_{K,M}\right]  )^{\top}$ we further have%
\begin{align}
&  \mathbb{E}_{\mathcal{P}_{\mathsf{E}}\otimes\mathsf{P}}\left[
\sup_{(x,a)\in\mathsf{S}\times\mathsf{A}}\left\vert \eta_{t+1,K,M}%
(x,a)-\widetilde{\eta}_{t+1,K,M}(x,a)\right\vert ^{2}|\mathcal{D}_{t+1}%
^{N}\right]  \nonumber\\
&  =\mathbb{E}_{\mathcal{P}_{\mathsf{E}}\otimes\mathsf{P}}\left[
\sup_{(x,a)\in\mathsf{S}\times\mathsf{A}}\left\vert \left(  \mathbf{c}%
_{K,M}-I\left[  \mathbf{c}_{K,M}\right]  \right)  ^{\top}(x,a)\boldsymbol{\psi
}_{K}(\varepsilon_{t+1})\right\vert ^{2}|\mathcal{D}_{t+1}^{N}\right]
\nonumber\\
&  \leq\varrho_{\psi,K}^{2}\mathbb{E}_{\mathsf{P}}\left[  \sup_{(x,a)\in
\mathsf{S}\times\mathsf{A}}\left\vert \left(  \mathbf{c}_{K,M}-I\left[
\mathbf{c}_{K,M}\right]  \right)  (x,a)\right\vert ^{2}|\mathcal{D}_{t+1}%
^{N}\right]  \nonumber\\
&  \leq\varrho_{\psi,K}^{2}\sum_{k=1}^{K}\mathbb{E}_{\mathsf{P}}\left[
\sup_{(x,a)\in\mathsf{S}\times\mathsf{A}}\left(  c_{k,M}-I\left[
c_{k,M}\right]  \right)  ^{2}(x,a)|\mathcal{D}_{t+1}^{N}\right]  \nonumber\\
&  \leq K\varrho_{\psi,K}^{2}L_{V,K_{\text{pr}}}^{2}L_{\mathcal{K}}^{2}%
\Lambda_{\mathsf{E,}K}^{2}\rho_{L}^{2}(\mathsf{S},\mathsf{A}),\label{ga4}%
\end{align}
using \eqref{eq:interp-error}. Finally note that
\[
\eta_{t+1,K}-\widetilde{\eta}_{t+1,K,M}=({\mathbf{c}}_{K}-\bar{\mathbf{c}}%
_{K})^{\top}\boldsymbol{\psi}_{K}+(\bar{\mathbf{c}}_{K}-\mathbf{c}%
_{K,M})^{\top}\boldsymbol{\psi}_{K}+\eta_{t+1,K,M}-\widetilde{\eta}_{t+1,K,M}%
\]
and then the result follows by the triangle inequality, gathering
(\ref{ga1})--(\ref{ga4}), and finally taking the unconditional expectation  $\mathbb{E}_{\mathcal{P}_{\mathsf{E}}\otimes\mathsf{P}}$.

%% ========================== Appendix ========================== %%
\newpage
\appendix

\section{Some auxiliary notions}\label{notation}
The Orlicz 2-norm of a real valued random variable $\eta$ with respect to the
function $\varphi(x)=e^{x^{2}}-1$, $x\in\mathbb{R}$, is defined by $\Vert
\eta\Vert_{\varphi,2}:=\inf\{t>0:\mathbb{E}\left[  {\exp}\left(  {\eta
^{2}/t^{2}}\right)  \right]  \leq2\}$. We say that $\eta$ is
\emph{sub-Gaussian} if $\Vert\eta\Vert_{\varphi,2}<\infty$. In particular,
this implies that for some constants $C,c>0$,
\[
\mathsf{P}(|\eta|\geq t)\leq2\exp\left(  -\frac{ct^{2}}{\Vert\eta
\Vert_{\varphi,2}^{2}}\right)  \text{\ \ and \ \ }\mathbb{E}[|\eta|^{p}%
]^{1/p}\leq C\sqrt{p}\Vert\eta\Vert_{\varphi,2}\text{ \ \ for all \ \ }p\geq1.
\]
Consider a real valued random process $(X_{t})_{t\in\mathcal{T}}$ on a metric
parameter space $(\mathcal{T},\mathsf{d})$. We say that the process has
\emph{sub-Gaussian increments} if there exists $K\geq0$ such that
\[
\Vert X_{t}-X_{s}\Vert_{\varphi,{2}}\leq K\mathsf{d}(t,s),\quad\forall
t,s\in\mathcal{T}.
\]
Let $(\mathsf{Y},\rho)$ be a metric space and $\mathsf{X\subseteq Y}$. For
$\varepsilon>0$, we denote by $\mathcal{N}(\mathsf{X},\rho,\varepsilon)$ the
covering number of the set $\mathsf{X}$ with respect to the metric $\rho$,
that is, the smallest cardinality of a set (or net) of $\varepsilon$-balls in
the metric $\rho$ that covers $\mathsf{X}$. Then $\log\mathcal{N}%
(\mathsf{X},\rho,\varepsilon)$ is called the metric entropy of $\mathsf{X}$
and
\[
I_{\mathcal{D}}(\mathsf{X}):=\int_{0}^{\mathsf{D}(\mathsf{X})}\sqrt
{\log\mathcal{N}\bigl(\mathsf{X},\rho,u\bigr)}\,du
\]
with $\mathsf{D}(\mathsf{X}):=\operatorname{diam}(\mathsf{X}):=\max
_{x,x^{\prime}\in\mathsf{X}}\rho(x,x^{\prime}),$ is called the Dudley
integral. For example, if $|\mathsf{X}|<\infty$ and $\rho(x,x^{\prime
})=1_{\{x\neq x^{\prime}\}}$ we get $I_{\mathcal{D}}(\mathsf{X})=\sqrt
{\log|\mathsf{X}|}.$

\section{Estimation of mean uniformly in parameter}

The following proposition holds.
\begin{proposition}
\label{prop:unif-exp}
Let \(f\) be a function on \(\mathsf{X}\times \Xi\) such that
\begin{eqnarray}
\left|f(x,\xi)-f(x',\xi)\right|\leq L\rho(x,x')
\end{eqnarray}
with some constant \(L>0.\) Furthermore assume that \(\|f\|_{\infty}\leq F<\infty\) for some \(F>0.\) Let \(\xi_n,\) \(n=1,\ldots,N,\) be i.i.d. sample from a distribution on \( \Xi.\) Then we have
\begin{eqnarray*}
\mathbb{E}^{1/p}\left[\sup_{x\in \mathsf{X} }\left\vert \frac{1}{N}\sum_{n=1}^{N}\left(f(x,\xi_n)-\mathbb{E}f(x,\xi_n)\right)\right\vert ^{p}\right]\lesssim \frac{L I_{\mathcal D} +   (L \mathsf{D}  +  F) \sqrt{p}}{\sqrt{N}},
\end{eqnarray*}
where $\lesssim$ may be interpreted as $\le$ up to a
natural constant.
\end{proposition}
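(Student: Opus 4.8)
The plan is to recognize the left-hand side as the $p$-th moment of the supremum of a centered empirical process indexed by $x\in\mathsf{X}$, and to control it by chaining (Dudley's inequality) after verifying that the process has sub-Gaussian increments with respect to a rescaled version of $\rho$. Concretely, I would set
\[
G_N(x):=\frac{1}{N}\sum_{n=1}^N\bigl(f(x,\xi_n)-\mathbb{E}f(x,\xi_n)\bigr),
\]
so that the quantity to bound is $\mathbb{E}^{1/p}\bigl[\sup_{x}|G_N(x)|^p\bigr]$. For the increments, the Lipschitz hypothesis gives $|f(x,\xi_n)-f(x',\xi_n)|\le L\rho(x,x')$ for every $n$, so $G_N(x)-G_N(x')$ is an average of $N$ i.i.d.\ centered variables each bounded in absolute value by $2L\rho(x,x')$. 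By Hoeffding's lemma the sum is sub-Gaussian, whence
\[
\Vert G_N(x)-G_N(x')\Vert_{\varphi,2}\;\lesssim\;\frac{L}{\sqrt{N}}\,\rho(x,x').
\]
Thus $(G_N(x))_{x\in\mathsf{X}}$ has sub-Gaussian increments with respect to the metric $\mathsf{d}:=\tfrac{L}{\sqrt N}\rho$.

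Next I would invoke the moment form of Dudley's inequality for sub-Gaussian processes: fixing an anchor $x_0\in\mathsf{X}$,
\[
\mathbb{E}^{1/p}\Bigl[\sup_{x\in\mathsf{X}}|G_N(x)-G_N(x_0)|^p\Bigr]\;\lesssim\;\int_0^{\mathsf{D}_{\mathsf{d}}(\mathsf{X})}\!\!\sqrt{\log\mathcal{N}(\mathsf{X},\mathsf{d},u)}\,du\;+\;\sqrt{p}\,\mathsf{D}_{\mathsf{d}}(\mathsf{X}),
\]
where $\mathsf{D}_{\mathsf{d}}$ is the diameter in the metric $\mathsf{d}$. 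The rescaling $\mathsf{d}=\tfrac{L}{\sqrt N}\rho$ gives $\mathsf{D}_{\mathsf{d}}(\mathsf{X})=\tfrac{L}{\sqrt N}\mathsf{D}(\mathsf{X})$, and after the substitution $v=u\sqrt N/L$ the entropy integral becomes $\tfrac{L}{\sqrt N}\int_0^{\mathsf{D}(\mathsf{X})}\sqrt{\log\mathcal{N}(\mathsf{X},\rho,v)}\,dv=\tfrac{L}{\sqrt N}I_{\mathcal{D}}(\mathsf{X})$. Hence this term is at most $\bigl(L\,I_{\mathcal{D}}(\mathsf{X})+L\,\mathsf{D}(\mathsf{X})\sqrt{p}\bigr)/\sqrt N$.

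It remains to add back the value at the anchor. Since $\Vert f\Vert_\infty\le F$, the summands defining $G_N(x_0)$ are centered and bounded by $2F$, so a single-point sub-Gaussian moment estimate yields $\mathbb{E}^{1/p}\bigl[|G_N(x_0)|^p\bigr]\lesssim F\sqrt{p}/\sqrt N$. Combining via $\sup_x|G_N(x)|\le|G_N(x_0)|+\sup_x|G_N(x)-G_N(x_0)|$ and the triangle inequality in $L^p(\mathbb{P})$ then produces the asserted bound $\lesssim\bigl(L\,I_{\mathcal{D}}+(L\,\mathsf{D}+F)\sqrt{p}\bigr)/\sqrt N$.

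The main obstacle is the moment version of Dudley's inequality itself: obtaining the chaining bound with explicit and uniform $\sqrt p$-dependence requires summing the sub-Gaussian tail contributions across dyadic scales of a sequence of nets, converting the resulting tail estimate into an $L^p$-bound valid for every $p\ge 1$, and isolating the diameter term that multiplies $\sqrt p$. This is the one step where one either cites a ready-made empirical-process result for sub-Gaussian processes or reproduces the standard chaining argument; the remaining ingredients (the Hoeffding increment estimate, the change of variables in the entropy integral, and the anchor-point bound) are routine. A minor technical point to flag is the measurability of the supremum, which is handled as usual by passing to a countable dense subset or to outer expectation under the implicit separability of the process.
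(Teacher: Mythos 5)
Your proposal is correct and follows essentially the same route as the paper's proof: sub-Gaussian increments of the centered empirical process via Hoeffding, an anchor-point decomposition $\sup_x|G_N(x)|\le|G_N(x_0)|+\sup_x|G_N(x)-G_N(x_0)|$, Dudley chaining for the increment term, and a single-point sub-Gaussian bound for the anchor. The only (immaterial) difference is that you invoke the moment form of Dudley's inequality directly, whereas the paper states the high-probability form with $\sqrt{\log(1/\delta)}$ and leaves the tail-to-moment conversion implicit.
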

\begin{proof}
Denote
\[
Z(x):=\frac{1}{\sqrt{N}}\sum_{n=1}^{N}\left(f(x,\xi_n)-M_{f}(x)\right)
\]
with $M_{f}(x)=\mathsf{E}[f(x,\xi)],$ that is, $Z(x)$ is
a centered random process on the metric space $(\mathsf{X},\rho)$.
Below we show that the process $Z(x)$ has sub-Gaussian increments.
In order to show it, let us introduce
\[
Z_{n}=f(x,\xi_n)-M_{f}(x)-f(x',\xi_n)+M_{f}(x').
\]
Under our assumptions we get
\[
\|Z_{n}\|_{\psi_{2}}\lesssim L\rho(x,x'),
\]
that is, $Z_{n}$ is subgaussian for any $n=1,\ldots,N.$ Since
\[
\ensuremath{Z(x)-Z(x')=N^{-1/2}\sum_{n=1}^{N}Z_{n}},
\]
is a sum of independent sub-Gaussian r.v, we may apply \cite[Proposition 2.6.1 and Eq. (2.16)]{Vershynin}) to obtain that
$Z(x)$ has sub-Gaussian increments with parameter $K\asymp L$. Fix
some $x_0\in \mathsf{X}.$ By the triangular inequality,
\[
\sup_{x\in\mathsf{X}}|Z(x)|\le\sup_{x,x'\in\mathsf{X}}|Z(x)-Z(x')|+\left|Z(x_0)\right|.
\]
By the Dudley integral inequality, e.g. \cite[Theorem
8.1.6]{Vershynin}, for any $\delta\in(0,1)$,
\[
\sup_{x,x'\in\mathsf{X}}|Z(x)-Z(x')|\lesssim L\bigl[I_{\mathcal{D}}+\mathsf{D}\sqrt{\log(2/\delta)}\bigr]
\]
holds with probability at least $1-\delta$. Again, under our assumptions,
$Z(x_0)$ is a sum of i.i.d. bounded centered random variables with
$\psi_{2}$-norm bounded by $F$. Hence, applying Hoeffding's inequality,
e.g. \cite[Theorem 2.6.2.]{Vershynin}, for any
$\delta\in(0,1)$,
\[
|Z(x_0)|\lesssim F\sqrt{\log(1/\delta)}.
\]
\end{proof}

\bibliography{biblio-rl,stop}

\begin{thebibliography}{10}

\bibitem{J_AB2004}
Leif Andersen and Mark Broadie.
\newblock {A Primal-Dual Simulation Algorithm for Pricing Multi-Dimensional
  American Options.}
\newblock {\em Management Science}, 50(9):1222--1234, 2004.

\bibitem{antos2007fitted}
Andr{\'a}s Antos, R{\'e}mi Munos, and Csaba Szepesv{\'a}ri.
\newblock Fitted q-iteration in continuous action-space mdps.
\newblock 2007.

\bibitem{azar:2017}
Mohammad~Gheshlaghi Azar, Ian Osband, and R{\'e}mi Munos.
\newblock Minimax regret bounds for reinforcement learning.
\newblock 70:263--272, 06--11 Aug 2017.

\bibitem{J_BalMahSch}
Sven Balder, Antje Mahayni, and John Schoenmakers.
\newblock {Primal-dual linear {M}onte {C}arlo algorithm for multiple
  stopping---an application to flexible caps}.
\newblock {\em Quant. Finance}, 13(7):1003--1013, 2013.

\bibitem{BaRi}
Nicole B{\"a}uerle and Ulrich Rieder.
\newblock {\em Markov decision processes with applications to finance.}
\newblock Universitext. Berlin: Springer, 2011.

\bibitem{BayRedSch}
Christian Bayer, Martin Redmann, and John Schoenmakers.
\newblock Dynamic programming for optimal stopping via pseudo-regression.
\newblock {\em Quant. Finance}, 21(1):29--44, 2021.

\bibitem{beliakov2006interpolation}
Gleb Beliakov.
\newblock Interpolation of {L}ipschitz functions.
\newblock {\em Journal of computational and applied mathematics},
  196(1):20--44, 2006.

\bibitem{BenSchZan}
Denis Belomestny, Christian Bender, and John Schoenmakers.
\newblock True upper bounds for bermudan products via non-nested {Monte}
  {Carlo}.
\newblock {\em Math. Finance}, 19(1):53--71, 2009.

\bibitem{J_BelKolSch}
Denis Belomestny, Anastasia Kolodko, and John Schoenmakers.
\newblock {Regression methods for stochastic control problems and their
  convergence analysis}.
\newblock {\em SIAM J. Control Optim.}, 48(5):3562--3588, 2010.

\bibitem{J_BenSchZan}
Christian Bender, John Schoenmakers, and Jianing Zhang.
\newblock Dual representations for general multiple stopping problems.
\newblock {\em Math. Finance}, 25(2):339--370, 2015.

\bibitem{Bertsekas+Tsitsiklis:1996}
Dimitri~P. Bertsekas and John~N. Tsitsiklis.
\newblock {\em Neuro-Dynamic Programming}.
\newblock Athena Scientific, 1996.

\bibitem{brown2022information}
David~B Brown, James~E Smith, et~al.
\newblock Information relaxations and duality in stochastic dynamic programs: A
  review and tutorial.
\newblock {\em Foundations and Trends{\textregistered} in Optimization},
  5(3):246--339, 2022.

\bibitem{J_BroSmiSun}
David~B. Brown, James~E. Smith, and Peng Sun.
\newblock {Information relaxations and duality in stochastic dynamic programs}.
\newblock {\em Oper. Res.}, 58(4, part 1):785--801, 2010.

\bibitem{desai2012bounds}
Vijay~V Desai, Vivek~F Farias, and Ciamac~C Moallemi.
\newblock Bounds for markov decision processes.
\newblock {\em Reinforcement learning and approximate dynamic programming for
  feedback control}, pages 452--473, 2012.

\bibitem{J_HK2004}
Martin Haugh and Leonid Kogan.
\newblock {Pricing American options: A duality approach.}
\newblock {\em Oper. Res.}, 52(2):258--270, 2004.

\bibitem{q_learning_efficient}
Chi Jin, Zeyuan Allen-Zhu, Sebastien Bubeck, and Michael~I Jordan.
\newblock Is q-learning provably efficient?
\newblock In S.~Bengio, H.~Wallach, H.~Larochelle, K.~Grauman, N.~Cesa-Bianchi,
  and R.~Garnett, editors, {\em Advances in Neural Information Processing
  Systems}, volume~31. Curran Associates, Inc., 2018.

\bibitem{pires2016policy}
Bernardo~{\'A}vila Pires and Csaba Szepesv{\'a}ri.
\newblock Policy error bounds for model-based reinforcement learning with
  factored linear models.
\newblock In {\em Conference on Learning Theory}, pages 121--151. PMLR, 2016.

\bibitem{puterman2014markov}
Martin~L. Puterman.
\newblock {\em Markov decision processes: discrete stochastic dynamic
  programming}.
\newblock Wiley Ser. Probab. Math. Stat. New York, NY: John Wiley \& Sons,
  Inc., 1994.

\bibitem{Reznikov}
A.~Reznikov and E.~B. Saff.
\newblock The covering radius of randomly distributed points on a manifold.
\newblock {\em Int. Math. Res. Not. IMRN}, (19):6065--6094, 2016.

\bibitem{rogers_2007_pathwise}
L.~Rogers.
\newblock Pathwise stochastic optimal control.
\newblock {\em SIAM J. Control and Optimization}, 46:1116--1132, 01 2007.

\bibitem{J_Rogers2002}
Leonard C.~G. Rogers.
\newblock {{M}onte {C}arlo valuation of {A}merican options}.
\newblock {\em Mathematical Finance}, 12(3):271--286, 2002.

\bibitem{J_Schoen2010}
J.~Schoenmakers.
\newblock {A pure martingale dual for multiple stopping.}
\newblock {\em Finance Stoch.}, 16:319--334, 2012.

\bibitem{sutton:book:2018}
R.~S. Sutton and Andrew~G. Barto.
\newblock {\em Reinforcement Learning: An Introduction}.
\newblock The MIT Press, second edition, 2018.

\bibitem{szepesvari2010algorithms}
Csaba Szepesv{\'a}ri.
\newblock Algorithms for reinforcement learning.
\newblock {\em Synthesis lectures on artificial intelligence and machine
  learning}, 4(1):1--103, 2010.

\bibitem{Vershynin}
Roman Vershynin.
\newblock {\em High-dimensional probability}, volume~47 of {\em Cambridge
  Series in Statistical and Probabilistic Mathematics}.
\newblock Cambridge University Press, Cambridge, 2018.
\newblock An introduction with applications in data science, With a foreword by
  Sara van de Geer.

\bibitem{Z}
Daniel~Z. Zanger.
\newblock {Quantitative error estimates for a least-squares {M}onte {C}arlo
  algorithm for {A}merican option pricing}.
\newblock {\em Finance and Stochastics}, 17(3):503--534, 2013.

\bibitem{zhu2017solving}
Helin Zhu, Fan Ye, and Enlu Zhou.
\newblock Solving the dual problems of dynamic programs via regression.
\newblock {\em IEEE Transactions on Automatic Control}, 63(5):1340--1355, 2017.

\end{thebibliography}
\bibliographystyle{plain}

\end{document}